\newtheorem{theorem}{Theorem}[section]
\newtheorem{lemma}[theorem]{Lemma}
\theoremstyle{definition}
\newtheorem{definition}[theorem]{Definition}
\theoremstyle{remark}
\newtheorem*{remark}{Remark}
\newtheorem{hyp}{H}
\newcommand{\N}{\mathbb{N}}              
\newcommand{\R}{\mathbb{R}}              
\newcommand{\hd}{\mathcal{H}}            
\newcommand{\B}{\mathcal{B}}             
\newcommand{\C}{\mathcal{C}}             
\newcommand{\de}{\delta}                 
\newcommand{\De}{\Delta}                 
\newcommand{\E}{\mathcal{E}}             
\newcommand{\f}{\mathbf{f}}              
\newcommand{\M}{\mathcal{M}}
\newcommand{\s}{\mathcal{S}}             
\newcommand{\dm}{\mathrm{d}\mu}
\newcommand{\la}{\lambda}                
\newcommand{\La}{\Lambda}
\newcommand{\dom}{\text{dom}}            
\newcommand{\e}{\epsilon}                
\newcommand{\ce}{\coloneqq}              
\newcommand{\del}{\partial}              
\newcommand{\al}{\alpha}
\newcommand{\be}{\beta}
\newcommand{\ga}{\gamma}
\newcommand{\Om}{\Omega}
\newcommand{\noi}{\noindent}
\newcommand{\uline}{\underline}
\newcommand{\oline}{\overline}
\newcommand{\bi}{\bibitem}  			
\numberwithin{equation}{section}
\begin{document}

\title[A SYSTEM OF p-LAPLACIAN EQUATIONS ON THE SIERPI\'NSKI GASKET]{A SYSTEM OF p-LAPLACIAN EQUATIONS ON THE SIERPI\'NSKI GASKET}

\author[A. Sahu]{Abhilash Sahu}
\address{Department of Mathematics, Indian Institute of Technology Delhi, Hauz Khas, New Delhi 110016, India}
\email{sahu.abhilash16@gmail.com}
\author[A. Priyadarshi]{Amit Priyadarshi}
\address{Department of Mathematics, Indian Institute of Technology Delhi, Hauz Khas, New Delhi 110016, India}
\email{priyadarshi@maths.iitd.ac.in}

\subjclass[2010]{Primary 28A80, 35J61}

\keywords{Sierpi\'nski gasket, p-Laplacian, weak solutions, p-energy, Euler functional, system of equations}
\date{}

\begin{abstract}
In this paper we study a system of boundary value problems involving weak p-Laplacian
on the Sierpi\'nski gasket in $\R^2$. Parameters $\la, \ga, \al, \be$ are real and $1<q<p<\al+\be.$ Functions $a,b,h : \s \rightarrow \R$ are suitably chosen. For $p>1$ we show the existence of at least two nontrivial weak solutions to the system of equations for some $(\la,\ga) \in \R^2.$ \end{abstract}
\maketitle
\section{Introduction}
In this article we will discuss the existence of the weak solutions to the following system of boundary value problem on the Sierpi\'nski gasket.
\begin{equation}\label{prob}
\begin{split}
  -\De_p u &= \la a(x)|u|^{q-2} u + \frac{\al}{\al+\be}h(x)|u|^{\al-2}u |v|^\be \; \text{in}\; \s\setminus\s_0;\\
  -\De_p v &= \ga b(x)|v|^{q-2} v + \frac{\be}{\al+\be}h(x)|u|^{\al}|v|^{\be-2}v \; \text{in}\; \s\setminus\s_0;\\
    u &= v=0\;\mbox{on}\; \s_0,
 \end{split}
\end{equation}
where $\s$ is the Sierpi\'nski gasket in $\R^2$, $\s_0$ is the boundary of the Sierpi\'nski gasket and $\De_p$ denotes the $p$-Laplacian where $p>1$. We will discuss about it in the next section. We will assume the following hypotheses.
\begin{hyp}\label{hyp_1}
$q, p, \al$ and $\be$ are positive real numbers satisfying $1< q < p < \al +\be$.
\end{hyp}
\begin{hyp}\label{hyp_3}
$a, b, h : \s \to \R$ are functions belonging to $L^1(\s,\mu)$. $a, b, h \geq 0$ and $a, b, h \not\equiv 0.$ Also, $\|h\|_1 >0.$
\end{hyp}
\begin{hyp}\label{hyp_2}
$\la$ and $\ga$ are real parameters satisfying $|\la|\|a\|_1 + |\ga|\|b\|_1 < \kappa_0.$
\end{hyp}
\noi If $(u,v) \in \dom_0(\E_p) \times \dom_0(\E_p)$ and satisfies
$$\int\limits_{\s} \la a(x)|u|^{q-2} u \phi_1 + \frac{\al}{\al+\be}\int\limits_{\s}h(x)|u|^{\al-2}u |v|^\be\phi_1 + \int\limits_{\s} \ga b(x)|v|^{q-2} v\phi_2 + \frac{\be}{\al+\be}\int\limits_{\s}h(x)|u|^{\al}|v|^{\be-2}v \phi_2 \in \E_p(u,\phi_1) + \E_p(v,\phi_2) $$
for all $(\phi_1, \phi_2) \in \dom_0(\E_p) \times \dom_0(\E_p),$ then we will call $(u,v)$ to be a weak solution of \eqref{prob}.\\

Differential equation on fractal domains has been of great interest for researcher for past few decades. We will go back in time line and give a brief review of literature survey. In \cite{kiga1,kiga2,str,falc}, Laplacian is defined on Sierpi\'nski gasket and in \cite{kiga3,tang} on some p.c.f. fractals. Freiberg and Lancia \cite{fl} defined Laplacian on some non self similar fractals. In \cite{sw,hps,koz}, p-Laplacian is defined on Sierpi\'nski gasket. Once Laplacian and p-Laplacian is defined on fractals researchers address problems involving Laplacian and p-Laplacian. A vast amount of literature is available for Laplacian operators on fractal domains in contrast to p-Laplacian operators, which motivated us to study the p-Laplacian equations.

Falconer and Hu \cite{falc1} studied the problem
\[\De u + a(x)u= f(x,u)\]
with zero Dirichlet boundary condition on the Sierpi\'nski gasket $\s$ where $a :\s \to \R$ is integrable and $f:\s \times \R \to \R$ is continuous having some growth conditions near zero and infinity. They used the mountain pass and the saddle point theorem to prove the existence of a weak solution to this problem. Hua and Zhenya studied a semilinear PDE on self-similar fractal sets in \cite{zc1} and a nonlinear PDE on self similar fractals in \cite{zc2}. In \cite{denisa}, Denisa proved the existence of at least two nontrivial weak solutions for a Dirichlet problem involving the Laplacian on the Sierpi\'nski gasket using the Ekeland variational principle and the critical point theory and in \cite{denisa2} studied the problem
\begin{align*}
  -\De &u(x) + g(u(x)) = \la f(u(x)),~~~\text{for}~~ x \in \s\setminus\s_0\\
  &u(x)= 0,~~~ \text{for}~~ x \in \s_0.
\end{align*}
using variational method, she showed multiplicity of weak solutions for the problem. For p-Laplacian on Sierpinski gasket Strichartz and Wong \cite{sw} studied existence of solution and its numerical computation. Priyadarshi and Sahu\cite{ps} studied the problem 
\begin{align*}
  -\De_p u &= \lambda a(x)|u|^{q-1}u + b(x)|u|^{l-1}u ~~ \text{in}~ \s\setminus\s_0, \\
  u &= 0  ~~ \text{on}~ \s_0,
\end{align*}
and have shown the existence of two solutions under the assumptions $p>1, 0< q < p-1 < l, a,b : \s \to \R$ are bounded nonnegative functions for a small range of $\la.$ 
Many authors have tried to address problems on fractal domains (see, for example, \cite{falc,br,koz}).

Now, we will give brief review of work done so far on system of Laplacian equations and (p,q)-Laplacian on regular domains.
 The problem \eqref{prob} is motivated from the works on regular domains. Cl\'ement et.al \cite{cfm} studied the system $-\De u = f(v), -\De v = g(u)$ in $\Om$ and $u=v=0$ on $\del\Om,$ where $f,g \in \C(\R)$ nondecreasing function with $f(0) = g(0) = 0.$ They have shown the existence of positive solution under some more suitable conditions.
\begin{equation}\label{sys-2}
  \begin{cases}
    -\De_p u = \la a(x)|u|^{p_1-2}u + (\al+1)c(x) |u|^{\al-1}u|v|^{\be+1}, & \mbox{if } x \in \Om \\
    -\De_q v = \mu b(x)|v|^{q-2}v + (\be+1)c(x) |u|^{\al+1}|v|^{\be-1}v, & \mbox{if } x \in \Om \\
  \end{cases}
\end{equation}
Bozhkov and Mitidieri \cite{bm} studied existence and non-existence results for the quasilinear system \eqref{sys-2} with boundary condition $u = v = 0 \mbox{ if  } x \in \del\Om$ and $p_1=p.$ Here $\al, \be, \la, \mu, p>1, q>1$ are real numbers, $\De_p$ and $\De_q$ are p and q-Laplace operators, respectively. Also, $a(x), b(x), c(x)$ are suitably chosen functions. Using fibering method introduced by Pohozaev, they have shown the existence of multiple solutions to the problem. Adriouch and Hamidi \cite{ah} studied the problem \eqref{sys-2} with Dirichlet or mixed boundary conditions, under some hypotheses on parameters $p,p_1,\al,\be$ and $q.$ They have shown the existence and multiplicity results with the help of Palais-Smale sequences in the Nehari manifold, with respect to real parameter $\la,\mu.$

In \cite{dt}, Djellit and Tas studied the problem :
\begin{equation}\label{sys-3}
  \begin{cases}
    -\De_p u = \la f(x,u,v), & \mbox{if } x \in \R^N \\
    -\De_q u = \mu g(x,u,v), & \mbox{if } x \in \R^N,
  \end{cases}
\end{equation}
where $\la, \mu, p, q$ are positive real numbers satisfying $2 \leq p,q<N$, $\De_p$ and $\De_q$ are $p$, $q$-Laplacian, respectively and $f,g : \R^n \times \R \times \R \to \R.$ Under some hypotheses they have shown that the system has solutions using fixed point theorems.
J. Zhang and Z. Zhang \cite{zz} studied the problem \eqref{sys-3} for $p=q=2$ on $\Om$ bounded smooth open subset of $\R^N, u(x)=v(x)=0 \mbox{ if } x \in \del\Om $ and $f$ and $g$ are Carath\'eodory functions. They used variational methods to show the existence of weak solutions.

T-F Wu \cite{wu7} studied the following problem :
\begin{equation}\label{sys}
\begin{cases}
   -\De u = \la f(x) |u|^{q-2}u + \frac{\al}{\al+\be} h(x) |u|^{\al-2}u |v|^{\be}, &  x \in \Om \\
   -\De v = \mu g(x) |v|^{q-2}v + \frac{\be}{\al+\be} h(x) |u|^{\al}|v|^{\be-2}v,  &  x \in \Om \\
   u = v = 0, & x \text{ on } \del\Om.
\end{cases}
\end{equation}
where $1<q<2, \al >1, \be >1$ satisfying $2< \al + \be < 2^*(2^* = \frac{2N}{N-2}$ if $N= 3, 2^* = \infty$ if $N = 2), f,g \in L^{p^*}$ and $h \in \C(\oline{\Om})$ with $\|h\|_{\infty} = 1.$ Under this assumptions they have shown that the system \eqref{sys} has at least two nontrivial nonnegative solutions for some $(\la,\mu) \in \R^2.$ A more general version of system of p-Laplacian equations studied by Afrouzi and Rasouli \cite{ar}. A similar kind of problems was addressed by Brown and Wu \cite{bw2} and F-Y Lu \cite{lu} involving derivative boundary conditions. In both the articles they have shown that the systems have at least two solutions. Many authors have tried to address problems on system of equations (see, for example, \cite{ams,velin,hamidi,as,yw,wu6,cw}).

Following these footprints, we will move forward and study the problem \eqref{prob} on the Sierpi\'nski gasket. But there is no well-known concept of Laplacian and p-Laplacian on Sierpi\'nski gasket. So, we will clarify the notion of Laplacian and p-Laplacian on Sierpi\'nski gasket. Then we will define weak solutions to our problem \eqref{prob}. We will discuss about it in next section. Once the Laplacian is defined, one generally constructs a Hilbert space and then establish compactness theorems and minimax theorems to study PDEs which is not the case here. The function space considered here, that is, $\dom(\E_p)\times \dom(\E_p)$ is not even known to be reflexive. So, extraction of a weakly convergent subsequence from a bounded sequence is not possible here. Also, the difficulty increases here because the Euler functional associated to \eqref{prob} is not differentiable. But overcoming all these issues, we prove the existence of at least two nontrivial weak solutions to \eqref{prob} for some $(\la,\ga) \in \R^2.$

The outline of our paper is as follows. In section 2 we discuss about the weak $p$-Laplacian on the Sierpi\'nski gasket and also describe how we are going from energy functional $\E_p(u)$ to energy form $\E_p(u,v)$. We recall some important results and state our main theorem. In section 3 we define the Euler functional $I_{\la,\ga}$ associated to our problem \eqref{prob}. We define fibering map $\Phi_{u,v}$ and also find a suitable subset of $\R^2$ for which problem \eqref{prob} has at least two nontrivial solutions. Finally, in section 4 we give the detailed proof of our theorem stated in section 2.
\section{Preliminaries and Main results}
We will work on the Sierpi\'nski gasket on $\R^2$. Let $\s_0 = \{q_1, q_2, q_3\}$ be three points on $\R^2$ equidistant from each other. Let $F_i(x) = \frac{1}{2}(x-q_i) + q_i$ for $i= 1,2,3$ and $F(A) = \cup_{i=1}^3F_i(A).$  It is well known that $F$ has a unique fixed point $\s$, that is, $\s = F(\s)$ (see, for instance,\cite[Theorem 9.1]{KF}), which is called the Sierpi\'nski gasket. Another way to view this Sierpi\'nski gasket is $\s = \oline{\cup_{j \geq 0}F^{j}(\s_0)},$ where $F^j$ denotes $F$ composed with itself $j$ times. We know that $\s$ is a compact set in $\R^2$ and we will use certain properties of functions on $\s$ due to compactness of the domain. It is well known that the Hausdorff dimension of $\s$ is $\frac{\ln 3}{\ln 2}$ and the $\frac{\ln 3}{\ln 2}$-dimensional Hausdorff measure is finite ($0<\hd^{\frac{\ln 3}{\ln 2}}(\s)<\infty)$ (see, \cite[Theorem 9.3]{KF}). Throughout this paper, we will use this measure. If $f$ is a measurable function on $\s$ then $\|f\|_1 \coloneqq \int_{\s}|f(x)| \dm.$
$L^1(\s,\mu) \coloneqq \{[f] : f~ \text{is measurable}, \|f\|_1 < \infty\}$ is a Banach space.

We define the $p$-energy  with the help of a three variable real valued function $A_p$ which is convex, homogeneous of degree $p$, invariant under addition of constant, permutation of indices and the markov property. This $A_p$ is determined by an even function $g(x)$ defined on [0, 1]. In fact, $g(x) = A_p(-1, x, 1)$ for $-1 \leq x \leq 1.$ From the properties of $A_p$ we can conclude this as, 
\begin{equation*}
A_p(a_1,a_2,a_3) = \left|\frac{a_3-a_1}{2}\right|^p g\left(\frac{2a_2-a_1-a_3}{a_3-a_1}\right)
\end{equation*}
satisfying $a_1\leq a_2 \leq a_3$ and $a_1 \neq a_3$. The $m^{\text{th}}$ level Sierpi\'nski gasket is $\s^{(m)} = \cup_{j=0}^m F^j(\s_0)$. We construct the $m^{\text{th}}$ level crude energy as $$E_p^{(m)}(u) = \sum_{|\omega| = m} A_p\left(u(F_\omega q_1), u(F_\omega q_2), u(F_\omega q_3)\right)$$ and $m^{\text{th}}$ level renormalized $p$-energy is given by $$\E_p^{(m)}(u) = (r_p)^{-m} E_p^{(m)}(u)$$ where $r_p$ is the unique (with respect to p, independent of $A_p$) renormalizing factor and $0 < r_p <1$. For more details, see \cite{hps}. Now we can observe that $\E_p^{(m)}(u)$ is a monotonically increasing function of $m$ because of renormalization. So we define the $p$-energy function as
$$\E_p(u) = \lim\limits_{m \to \infty} \E_p^{(m)}(u) $$ which exists for all $u$ as an extended real number. Now we define $\dom(\E_p)$ as the space of continuous functions $u$ satisfying $\E_p(u) < \infty.$ In \cite{hps}, it is shown that $\dom(\E_p)$ modulo constant functions forms a Banach space endowed with the norm $\|\cdot\|_{\E_p}$ defined as $$\|u\|_{\E_p} = \E_p(u)^{1/p}.$$ Now we will proceed to define energy form from energy function as follow
\begin{equation}\label{eq-6}
  \E_p(u,v) \coloneqq \frac{1}{p}~\left.\frac{\mathrm d}{\mathrm d t} \E_p(u+tv)\right|_{t=0}.
\end{equation}
Note that we do not know whether $\E_p(u+tv)$ is differentiable or not but we know by the convexity of $A_p$ that $\E_p(u)$ is a convex function. So, we interpret the equation \eqref{eq-6} as an interval valued equation. That is, $$\E_p(u,v) = [\E^-_p(u,v), \E^+_p(u,v)]$$ is a nonempty compact interval and the end points are the one-sided derivatives.
Also, it satisfies the following properties
\begin{enumerate}[(i)]
  \item $\E_p(u,av) = a~\E_p(u,v)$
  \item $\E_p(u,v_1 + v_2) \subseteq \E_p(u,v_1) + \E_p(u,v_2)$
  \item $\E_p(u,u) = \E_p(u)$
\end{enumerate}
We recall some results which will be required to prove our results.
\begin{lemma}\cite[Lemma 3.2]{sw}\label{lem-2}
There exists a constant $K_p$ such that for all $u \in \dom(\E_p)$ we have
\begin{equation*}
|u(x) - u(y)| \leq K_p \E_p(u)^{1/p}(r_p^{1/p})^m
\end{equation*}
whenever $x$ and $y$ belong to the same or adjacent cells of order $m$.
\end{lemma}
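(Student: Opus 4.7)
The plan is to pass from the coarse three-point energy $A_p$ at a single cell of order $m$ to a vertex-level oscillation estimate, and then extend to arbitrary points in a cell by telescoping across finer scales. First I would establish that $A_p$ dominates the $p$-th powers of its vertex differences, i.e.\ that there is a constant $C_1 = C_1(p)$ with
\[|a_i - a_j|^p \leq C_1\, A_p(a_1, a_2, a_3) \quad \text{for all indices } i,j \text{ and all triples } (a_1, a_2, a_3).\]
Using the representation $A_p(a_1,a_2,a_3) = \left|(a_3-a_1)/2\right|^p g((2a_2 - a_1 - a_3)/(a_3-a_1))$ when $a_1 \leq a_2 \leq a_3$, this reduces to showing that the continuous, even function $g$ has a strictly positive lower bound $c_0 > 0$ on $[-1,1]$. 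The Markov property together with the nondegeneracy of $A_p$ as a $p$-energy forbids $g$ from vanishing on a configuration with $a_1 = -1 \neq 1 = a_3$, and compactness of $[-1,1]$ then gives $g \geq c_0$. With this bound in hand, for every word $\omega$ of length $m$ and any two vertices $v, v'$ of $F_\omega(\s)$,
\[|u(v) - u(v')|^p \leq C_1\, A_p(u(F_\omega q_1), u(F_\omega q_2), u(F_\omega q_3)) \leq C_1\, E_p^{(m)}(u) = C_1 r_p^m\, \E_p^{(m)}(u) \leq C_1 r_p^m\, \E_p(u),\]
which is the desired vertex-level estimate with the correct scaling in $m$.

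Next I would upgrade this to arbitrary $x \in \s$ by a telescoping argument. Fix a descending chain $C_m(x) \supset C_{m+1}(x) \supset \cdots$ of cells containing $x$ and pick a vertex $v_k$ of $C_k(x)$ for each $k \geq m$. Because $v_k, v_{k+1}$ both lie in $C_k(x)$, a path of at most two hops through the vertex set of the level-$(k+1)$ subcells of $C_k(x)$ connects them, and the single-cell estimate at level $k+1$ bounds each hop by $C_1^{1/p} r_p^{(k+1)/p}\, \E_p(u)^{1/p}$. Density of vertex points combined with continuity of $u$ gives $u(v_k) \to u(x)$, so summing the resulting geometric series yields
\[|u(v_m) - u(x)| \leq \frac{2 C_1^{1/p} r_p^{1/p}}{1 - r_p^{1/p}}\, r_p^{m/p}\, \E_p(u)^{1/p}.\]
For $x$ and $y$ lying in the same or adjacent cells of order $m$, two applications of this pointwise-to-vertex bound plus one vertex-to-vertex estimate between vertices of a common or adjacent $m$-cell then delivers the lemma with an explicit constant $K_p$.

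The main obstacle is the positive lower bound $g \geq c_0 > 0$: this is the step that converts an integrated (energy) bound into a pointwise oscillation bound and is ultimately responsible for producing the Hölder factor $r_p^{m/p}$. Once that is available, the telescoping is routine since $r_p \in (0,1)$ ensures a convergent geometric series, and the recursive self-similar structure of $\s$ guarantees that consecutive vertex choices can always be joined by a uniformly short path through vertices of the next level.
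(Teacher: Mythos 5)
The paper itself offers no proof of this lemma---it is imported verbatim from Strichartz and Wong \cite{sw}---and your argument is a correct reconstruction of essentially the proof used there and in \cite{hps}: a uniform lower bound $g \ge c_0 > 0$ on $[-1,1]$ converting $A_p$ into control of $\max_{i,j}|a_i - a_j|^p$, the monotonicity chain $A_p(u(F_\omega q_1),u(F_\omega q_2),u(F_\omega q_3)) \le E_p^{(m)}(u) = r_p^m\,\E_p^{(m)}(u) \le r_p^m\,\E_p(u)$, and geometric chaining through vertices of nested cells, which is exactly the standard route to this H\"older-type estimate. The only inaccuracy is cosmetic: the positivity of $g$ follows from the nondegeneracy of $A_p$ (namely $A_p(a_1,a_2,a_3)=0$ only for constant triples, a standing assumption in \cite{hps}) together with continuity of $A_p$ (automatic, since a finite convex function on $\R^3$ is continuous) and compactness of $[-1,1]$---the Markov property you invoke plays no role here---while the continuity of $u$ needed for $u(v_k) \to u(x)$ in your telescoping step is built into the definition of $\dom(\E_p)$, so that limit is legitimate and your explicit constant $K_p$ is fine.
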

Let $\dom_0(\E_p)$ be a subspace of $\dom(\E_p)$ containing all functions which vanish at the boundary. Also, define a Banach space $\dom_0(\E_p)\times \dom_0(\E_p)$ with norm $\|(u,v)\|_{\E_p} \ce (\|u\|_{\E_p}^p + \|v\|_{\E_p}^p)^{1/p}.$
Often we will use the following inequality
\begin{equation}\label{norm}
  \|u\|_{\E_p} + \|v\|_{\E_p} \leq \|(u,v)\|_{\E_p} = (\|u\|_{\E_p}^p + \|v\|_{\E_p}^p)^{1/p}.
\end{equation}
\begin{lemma}\label{lem-5}
If $u \in \dom_0(\E_p)$ then there exists a real positive constant $K$ such that $\|u\|_\infty \leq K
\|u\|_{\E_p}.$
\end{lemma}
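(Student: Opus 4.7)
The plan is to combine the Hölder-type estimate of Lemma \ref{lem-2} with a telescoping chain that links any point $x \in \s$ back to a boundary vertex where $u$ vanishes. Since $\s$ is compact and $u$ is continuous, the sup-norm is attained, so it suffices to bound $|u(x)|$ for an arbitrary fixed $x \in \s$.

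First I would fix an address of $x$, that is, a sequence $\omega_1,\omega_2,\ldots \in \{1,2,3\}$ with $x \in C_m \ce F_{\omega_1}\!\circ\!\cdots\!\circ\!F_{\omega_m}(\s)$ for every $m \geq 0$ (with $C_0 = \s$). For each $m$ I then choose a vertex $y_m$ of $C_m$; concretely, take $y_m = F_{\omega_1\cdots\omega_m}(q_{j_m})$ for any fixed choice of $j_m \in \{1,2,3\}$. Two observations will make the chain work: (a) $y_0$ is automatically one of $q_1,q_2,q_3 \in \s_0$, so $u(y_0)=0$ because $u \in \dom_0(\E_p)$; and (b) $y_m$ and $y_{m-1}$ both lie in the cell $C_{m-1}$ of order $m-1$, and each is a vertex of a level-$m$ sub-cell of $C_{m-1}$; since the three level-$m$ sub-cells of $C_{m-1}$ are pairwise adjacent (they meet at junction points), $y_m$ and $y_{m-1}$ belong to the same or adjacent cells of order $m$.

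Applying Lemma \ref{lem-2} at level $m$ then gives
\[
|u(y_m) - u(y_{m-1})| \leq K_p\, \E_p(u)^{1/p}\, (r_p^{1/p})^m.
\]
Telescoping from $y_0$ to $y_m$ and summing the geometric series (which converges because $0 < r_p < 1$) yields
\[
|u(y_m)| = |u(y_m)-u(y_0)| \leq K_p\,\E_p(u)^{1/p}\sum_{k=1}^{m}(r_p^{1/p})^k \leq \frac{K_p\, r_p^{1/p}}{1-r_p^{1/p}}\,\|u\|_{\E_p}.
\]
Finally, since $x$ and $y_m$ both lie in $C_m$ and $\operatorname{diam}(C_m) \to 0$, continuity of $u$ gives $u(y_m) \to u(x)$, so passing to the limit produces $|u(x)| \leq K\|u\|_{\E_p}$ with $K \ce K_p\, r_p^{1/p}/(1-r_p^{1/p})$, which is independent of $x$ and $u$.

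The only delicate step is justifying adjacency of the cells in which $y_{m-1}$ and $y_m$ sit, so that Lemma \ref{lem-2} applies at each level; everything else is a geometric-series estimate and a continuity argument. The constant $K$ is explicit and depends only on the fixed constants $K_p$ and $r_p$ from the $p$-energy construction, which is what the statement requires.
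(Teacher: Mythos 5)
Your proof is correct and is essentially the paper's own argument written out in detail: the paper's terse proof likewise connects a point to the boundary through a string of points (your vertices $y_0,y_1,\dots$ along the address of $x$), applies Lemma \ref{lem-2} with the triangle inequality at each level, and sums the resulting geometric series using $0<r_p<1$. The only cosmetic difference is that you handle arbitrary $x\in\s$ by letting $y_m\to x$ and using continuity, where the paper implicitly relies on density of $\cup_{j\geq 0}F^{j}(\s_0)$ in $\s$; your careful justification of the adjacency of the cells containing $y_{m-1}$ and $y_m$ is exactly the detail the paper leaves unstated.
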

\begin{proof}
We can connect a point on $\cup_{j \geq 0}F^{j}(\s_0)$ and boundary point by a string of points. As boundary values are zero, using triangle inequality, Lemma \ref{lem-2} and the fact $0<r_p<1$ we get the result.
\end{proof}
Now we define a weak solution for the problem \eqref{prob}
\begin{definition}
We say $(u,v) \in \dom_0(\E_p) \times \dom_0(\E_p)$ is a weak solution to the problem \eqref{prob} if it satisfies
\begin{equation*}
\begin{split}
&\int\limits_{\s} \la a(x)|u|^{q-2} u \phi_1\dm + \frac{\al}{\al+\be}\int\limits_{\s}h(x)|u|^{\al-2}u |v|^\be\phi_1\dm + \int\limits_{\s} \ga b(x)|v|^{q-2} v\phi_2\dm + \frac{\be}{\al+\be}\int\limits_{\s}h(x)|u|^{\al}|v|^{\be-2}v \phi_2\dm \\
&\quad \in \E_p(u,\phi_1) + \E_p(v,\phi_2)
\end{split}
\end{equation*}
for all  $(\phi_1, \phi_2) \in \dom_0(\E_p) \times \dom_0(\E_p).$
\end{definition}
Our main result states that :
\begin{theorem}\label{main}
There exists a $\kappa_0 >0$ such that with the hypothesis H \ref{hyp_1}, H \ref{hyp_3} and H \ref{hyp_2} the problem \eqref{prob} has at least two nontrivial weak solutions.
\end{theorem}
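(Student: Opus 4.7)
The plan is to set up a Nehari manifold and fibering map framework, as the introduction suggests. I would associate to \eqref{prob} the Euler functional $I_{\la,\ga}:\dom_0(\E_p)\times\dom_0(\E_p)\to\R$ defined by
\begin{equation*}
I_{\la,\ga}(u,v)=\tfrac{1}{p}\|(u,v)\|_{\E_p}^{p}-\tfrac{\la}{q}\int_{\s}a|u|^{q}\dm-\tfrac{\ga}{q}\int_{\s}b|v|^{q}\dm-\tfrac{1}{\al+\be}\int_{\s}h|u|^{\al}|v|^{\be}\dm,
\end{equation*}
so that a weak solution of \eqref{prob} is exactly a critical point of $I_{\la,\ga}$ in the interval-derivative sense supplied by $\E_p(\cdot,\cdot)$. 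For each $(u,v)\neq(0,0)$, the fibering map $\Phi_{u,v}(t):=I_{\la,\ga}(tu,tv)$ is a one-variable function whose three relevant exponents $q<p<\al+\be$ produce at most two positive critical points, namely a local minimum and a local maximum. The Nehari manifold
\begin{equation*}
\mathcal{N}_{\la,\ga}:=\{(u,v)\in\dom_0(\E_p)\times\dom_0(\E_p)\setminus\{(0,0)\}:\Phi'_{u,v}(1)=0\}
\end{equation*}
then splits into $\mathcal{N}^{+}$, $\mathcal{N}^{0}$, $\mathcal{N}^{-}$ according to the sign of $\Phi''_{u,v}(1)$, and candidate solutions arise as minimizers over $\mathcal{N}^{+}$ and $\mathcal{N}^{-}$.

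Next, I would calibrate $\kappa_0$ in Hypothesis H \ref{hyp_2} so that $\mathcal{N}^{0}=\emptyset$. Eliminating the $|u|^{\al}|v|^{\be}$ term between $\Phi'_{u,v}(1)=0$ and $\Phi''_{u,v}(1)=0$ gives a pointwise estimate of the form $(p-q)\|(u,v)\|_{\E_p}^{p}\leq C(p,q,\al,\be)(|\la|\int a|u|^{q}+|\ga|\int b|v|^{q})$, which by Lemma \ref{lem-5} can be bounded by $C'\!\bigl(|\la|\|a\|_1+|\ga|\|b\|_1\bigr)\|(u,v)\|_{\E_p}^{q}$; choosing $\kappa_0$ strictly smaller than $(p-q)/C'$ forces $(u,v)=(0,0)$. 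With $\mathcal{N}^{0}=\emptyset$, a standard analysis of $\Phi_{u,v}$ shows that for every $(u,v)$ with $\int h|u|^\al|v|^\be>0$ there are unique $0<t^{+}(u,v)<t^{-}(u,v)$ projecting $(u,v)$ onto $\mathcal{N}^{+}$ and $\mathcal{N}^{-}$; both pieces are nonempty (using $\|h\|_1>0$), $I_{\la,\ga}$ is bounded below on $\mathcal{N}_{\la,\ga}$, and $\inf_{\mathcal{N}^{+}}I_{\la,\ga}<0$.

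To extract minimizers despite the lack of known reflexivity of $\dom_0(\E_p)$, I would use Lemma \ref{lem-2}: a bounded sequence $(u_n,v_n)$ in $\dom_0(\E_p)\times\dom_0(\E_p)$ is equicontinuous (adjacent-cell H\"older bound) and uniformly bounded (Lemma \ref{lem-5}), hence Arzel\`a--Ascoli yields a subsequence converging uniformly to some continuous $(u_0,v_0)$ with zero boundary values. Lower semicontinuity of $\E_p$, as the monotone increasing limit of the continuous $\E_p^{(m)}$, together with dominated convergence on the $L^{1}$-weighted terms (the integrands are controlled via uniform sup-norm bounds and $a,b,h\in L^{1}(\s,\mu)$), identifies $(u_0,v_0)$ as a minimizer on $\overline{\mathcal{N}^{\pm}}$; since $\mathcal{N}^{0}=\emptyset$ the limits actually lie in $\mathcal{N}^{+}$ and $\mathcal{N}^{-}$ respectively, and the separation $t^{+}<t^{-}$ shows they are distinct and nontrivial.

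The main obstacle will be verifying that these minimizers are genuine weak solutions in the interval sense, because $I_{\la,\ga}$ inherits the one-sided differentiability of $\E_p$ and the usual Lagrange multiplier calculus is not directly available. I would apply Ekeland's variational principle on each $\mathcal{N}^{\pm}$ to produce Palais--Smale-type sequences, then use $\Phi''_{u,v_0}(1)\neq 0$ at the minimizer to show that the Lagrange multiplier controlling the Nehari constraint vanishes in the limit, and finally pass to the limit in the variational inequality by exploiting the upper semicontinuity of $v\mapsto\E_p^{+}(u,v)$ and the interval-valued identity $\E_p(u,\phi)+\E_p(u,-\phi)\ni 0$. Combining these ingredients yields two distinct nontrivial $(u,v)\in\dom_0(\E_p)\times\dom_0(\E_p)$ satisfying the weak formulation of \eqref{prob}, which proves Theorem \ref{main}.
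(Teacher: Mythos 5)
Your overall architecture --- the Euler functional, fibering maps, the decomposition $\M_{\la,\ga}=\M_{\la,\ga}^+\cup\M_{\la,\ga}^0\cup\M_{\la,\ga}^-$, and Arzel\`a--Ascoli in place of weak compactness --- is exactly the paper's, but three of your steps fail as written. First, your argument that $\M_{\la,\ga}^0=\emptyset$ is a non sequitur: eliminating the $h$-term from $\Phi'_{u,v}(1)=0=\Phi''_{u,v}(1)$ yields only the \emph{upper} bound $\|(u,v)\|_{\E_p}^{p-q}\leq C'(|\la|\|a\|_1+|\ga|\|b\|_1)$, and since $p>q$ this is perfectly consistent with nonzero $(u,v)$ of small norm, so no choice of $\kappa_0$ ``forces $(u,v)=(0,0)$.'' You also need the complementary elimination of the $q$-term, which on $\M_{\la,\ga}^0$ gives $\int_\s h(x)|u|^{\al}|v|^{\be}\dm=\frac{p-q}{\al+\be-q}\|(u,v)\|_{\E_p}^p$ and hence, via Lemma \ref{lem-5} and $\|h\|_1>0$, a parameter-independent \emph{lower} bound $\|(u,v)\|_{\E_p}\geq c>0$; emptiness of $\M_{\la,\ga}^0$ comes from the incompatibility of the two bounds for small $|\la|\|a\|_1+|\ga|\|b\|_1$ (the paper's Lemma \ref{lem-4}). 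Second, ``since $\mathcal{N}^{0}=\emptyset$ the limits actually lie in $\mathcal{N}^{\pm}$'' does not follow: uniform convergence gives only $\E_p(u_0)\leq\limsup_n\E_p(u_n)$, so the limit of a minimizing sequence need not satisfy the Nehari constraint at all. The paper closes this by a fibering-map contradiction (if the norm strictly drops, the projection of $(u_0,v_0)$ onto the manifold would have energy strictly below the infimum, forcing norm equality and $t_0=1$), and on $\M_{\la,\ga}^-$ this identification additionally requires the quantitative bound $\inf_{\M_{\la,\ga}^-}I_{\la,\ga}\geq d_0>0$ of Lemma \ref{lem-8} to guarantee $\int_\s h(x)|u_1|^{\al}|v_1|^{\be}\dm>0$ in the limit. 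That lemma is also where the paper's $\kappa_0=\left(\frac{q}{p}\right)\kappa$ is actually fixed, strictly below the threshold emptying $\M_{\la,\ga}^0$; your $\kappa_0$, calibrated only to $\mathcal{N}^0=\emptyset$, is not small enough for this step. (It is this bound, not your ``separation $t^+<t^-$,'' which concerns two projections of a single point, that separates the two solutions: $I_{\la,\ga}<0$ at the $\M_{\la,\ga}^+$ minimizer, $I_{\la,\ga}\geq d_0>0$ at the $\M_{\la,\ga}^-$ one.)

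Third, Ekeland plus a vanishing Lagrange multiplier is the wrong tool here: $I_{\la,\ga}$ is not $C^1$, its ``derivative'' along a direction is only the interval $[\E_p^-,\E_p^+]$ of one-sided derivatives, and there is no linear functional against which a multiplier could even be defined, so the Palais--Smale-on-the-Nehari-manifold calculus you invoke has no meaning in this setting. The paper avoids it entirely: Lemma \ref{lem-3} applies the implicit function theorem to the scalar map $\f(a,b,c,d,t)=at^{p-1}-t^{q-1}(\la b+\ga c)-dt^{\al+\be-1}$ --- smooth in $t$, with the nonsmoothness of $\E_p$ hidden in the coefficient $a=\|(u_0+\e w_1,v_0+\e w_2)\|_{\E_p}^p$, which need only be continuous in $\e$ --- to produce $t_\e\to 1$ with $t_\e(u_0+\e\psi_1,v_0+\e\psi_2)\in\M_{\la,\ga}^{\pm}$. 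One then takes one-sided difference quotients of $I_{\la,\ga}$ along this admissible curve, uses $\Phi'_{u_0,v_0}(1)=0$ to annihilate the pure-rescaling contribution, and obtains the two inequalities sandwiching the integral terms between $\E_p^-(u_0,\psi_1)+\E_p^-(v_0,\psi_2)$ and $\E_p^+(u_0,\psi_1)+\E_p^+(v_0,\psi_2)$, i.e.\ membership in the interval $\E_p(u_0,\psi_1)+\E_p(v_0,\psi_2)$. To salvage your outline, replace the multiplier step with this projection-and-one-sided-derivative argument, add the missing lower bound in the $\M_{\la,\ga}^0$ step, and insert the $d_0$-estimate before attempting the $\M_{\la,\ga}^-$ minimization.
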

\section{Euler functional $I_{\la,\ga}$, Fibering map $\Phi_{u,v}$ and their analysis}
Let $\la, \ga$ be real parameters and  $q,p, \al, \be$ positive real number satisfying $1 <q<p<\al +\be.$ The Euler functional associated with the problem \eqref{prob} for all $(u,v) \in \dom_0(\E_p) \times \dom_0(\E_p)$ is defined as
$$ I_{\la,\ga}(u,v) = \frac{1}{p}\|(u,v)\|_{\E_p}^p - \frac{1}{q} \left(\int\limits_{\s} \la a(x)|u|^{q} \dm + \int\limits_{\s} \ga b(x)|v|^{q} \dm\right)- \frac{1}{\al +\be} \int\limits_{\s} h(x)|u|^{\al}|v|^{\be} \dm.$$

We do not know the range of $I_{\la,\ga}$ on $\dom_0(\E_p) \times \dom_0(\E_p).$ So, we will consider a set where it is bounded below and do our analysis. Consider the set
\begin{align*}
 &\M_{\la,\ga} \\
 &\quad = \left\{ (u,v) \in \dom_0(\E_p) \times \dom_0(\E_p)\setminus\{(0,0)\} : \|(u,v)\|_{\E_p}^p - \int\limits_{\s} \la a(x)|u|^{q} \dm - \int\limits_{\s} \ga
 b(x)|v|^{q}\dm - \int\limits_{\s} h(x)|u|^{\al}|v|^{\be} \dm = 0
 \right\}
\end{align*}
This means $(u, v) \in \M_{\la,\ga}$ if and only if
\begin{equation}\label{eq-1}
  \|(u,v)\|_{\E_p}^p - \int\limits_{\s} \la a(x)|u|^{q} \dm - \int\limits_{\s} \ga b(x)|v|^{q}\dm - \int\limits_{\s} h(x)|u|^{\al}|v|^{\be} \dm = 0.
\end{equation}
Using equation \eqref{eq-1}, we get the following as a consequence on $\M_{\la,\ga}$
\begin{equation}\label{eq-2}
\begin{split}
I_{\la,\ga}(u,v)  &= \left(\frac{1}{p}-\frac{1}{\al+\be}\right)\|(u,v)\|_{\E_p}^p - \left(\frac{1}{q} - \frac{1}{\al+\be}\right) \left(\int\limits_{\s} \la a(x)|u|^{q} \dm +\int\limits_{\s} \ga b(x)|v|^{q} \dm \right)
\end{split}
\end{equation}
At the same time we will get below equation as well
\begin{equation}\label{eq-3}
I_{\la,\ga}(u,v) = \left(\frac{1}{p}- \frac{1}{q}\right)\|(u,v)\|_{\E_p}^p + \left(\frac{1}{q} - \frac{1}{\al +\be}\right)\int\limits_{\s} h(x)|u|^{\al}|v|^{\be} \dm
\end{equation}
For any $(u,v) \in \dom_0(\E_p) \times \dom_0(\E_p)$, now define the map $\Phi_{u,v} : (0,\infty) \to \R$ by $\Phi_{u,v}(t) = I_{\la,\ga}(tu,tv)$, that is,
$$\Phi_{u,v}(t) = \frac{t^p}{p}\|(u,v)\|_{\E_p}^p - \frac{t^q}{q} \left(\int\limits_{\s} \la a(x)|u|^{q} \dm + \int\limits_{\s} \ga b(x)|v|^{q} \dm\right)- \frac{t^{\al +\be}}{\al +\be} \int\limits_{\s} h(x)|u|^{\al}|v|^{\be} \dm.$$
As $\Phi_{u,v}$ is a smooth function of $t$, we deduce the following lemma
\begin{lemma}\label{lem-1}
  $(tu,tv) \in \M_{\la,\ga}$ if and only if $\Phi'_{tu,tv}(1) = 0$ or $\Phi'_{u,v}(t) = 0.$
\end{lemma}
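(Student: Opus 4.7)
The plan is direct differentiation combined with the scaling homogeneity of each term. Since $\Phi_{u,v}(t)$ is a polynomial in $t$ with three terms of degrees $p$, $q$, and $\al+\be$, I would first differentiate in $t$ to obtain
\[
\Phi'_{u,v}(t) = t^{p-1}\|(u,v)\|_{\E_p}^p - t^{q-1}\Bigl(\int_{\s}\la a(x)|u|^{q}\,\dm + \int_{\s}\ga b(x)|v|^{q}\,\dm\Bigr) - t^{\al+\be-1}\int_{\s}h(x)|u|^{\al}|v|^{\be}\,\dm.
\]

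The key observation is that $\|\cdot\|_{\E_p}$ is $1$-homogeneous and the three integrals are positively homogeneous of degrees $q$, $q$, and $\al+\be$ respectively, so $\|(tu,tv)\|_{\E_p}^p = t^p\|(u,v)\|_{\E_p}^p$, $\int a(x)|tu|^q\,\dm = t^q \int a(x)|u|^q\,\dm$ (and likewise for the $(\ga,b,v)$ term), and $\int h(x)|tu|^\al|tv|^\be\,\dm = t^{\al+\be}\int h(x)|u|^\al|v|^\be\,\dm$. Multiplying the above display through by $t$ and substituting these identities, I obtain
\[
t\,\Phi'_{u,v}(t) = \|(tu,tv)\|_{\E_p}^p - \int_{\s}\la a(x)|tu|^{q}\,\dm - \int_{\s}\ga b(x)|tv|^{q}\,\dm - \int_{\s}h(x)|tu|^\al|tv|^\be\,\dm,
\]
whose right hand side is exactly the expression appearing in the defining equation \eqref{eq-1} for $\M_{\la,\ga}$ with the pair $(tu,tv)$. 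Since $t>0$ and $(u,v)\ne(0,0)$ force $(tu,tv)\ne(0,0)$, it follows that $(tu,tv)\in\M_{\la,\ga}$ iff the right hand side vanishes iff $\Phi'_{u,v}(t)=0$.

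For the equivalent form $\Phi'_{tu,tv}(1)=0$, I would rerun the same derivative computation but now with $(tu,tv)$ as the base pair and the parameter evaluated at $1$; this yields the identical right hand side as above, so in particular $\Phi'_{tu,tv}(1)=t\,\Phi'_{u,v}(t)$, and both stated conditions coincide with membership in $\M_{\la,\ga}$. There is no real analytic obstacle here: the lemma is just the fibering-map reformulation of the Nehari-type identity \eqref{eq-1}, and the only point needing care is the exclusion of $(0,0)$ from $\M_{\la,\ga}$, which is automatic since $t>0$ and $(u,v)\ne(0,0)$.
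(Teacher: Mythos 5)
Your proof is correct and follows exactly the route the paper intends: the paper states this lemma without proof as an immediate consequence of the explicit formula for $\Phi'_{u,v}(t)$, which is precisely the homogeneity computation $t\,\Phi'_{u,v}(t) = \Phi'_{tu,tv}(1) = \|(tu,tv)\|_{\E_p}^p - \int_{\s}\la a(x)|tu|^{q}\,\dm - \int_{\s}\ga b(x)|tv|^{q}\,\dm - \int_{\s}h(x)|tu|^{\al}|tv|^{\be}\,\dm$ that you carry out. Your added remark that $(u,v)\neq(0,0)$ is needed to rule out the trivial pair (excluded from $\M_{\la,\ga}$ by definition) is a point the paper leaves implicit, and it is handled correctly.
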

To make our study easier, we will subdivide $\M_{\la,\ga}$ into sets corresponding to local minima, local maxima and point of inflection at 1. Hence we define the sets as follows :
\begin{align*}
  \M_{\la,\ga}^+ &= \{(u,v) \in \M_{\la,\ga} : \Phi_{u,v}''(1) > 0\}, \\
  \M_{\la,\ga}^0 &= \{(u,v) \in \M_{\la,\ga} : \Phi_{u,v}''(1) = 0\}~ \text{and} \\
  \M_{\la,\ga}^- &= \{(u,v) \in \M_{\la,\ga} : \Phi_{u,v}''(1) < 0\}.
\end{align*}
\begin{definition}
Let $(\B,\|\cdot\|_\B)$ be a Banach space. A functional $f : \B \to \R$ is said to be coercive if $f(x) \to \infty$ as $\|x\|_{\B} \to \infty.$
\end{definition}
The next result will be very crucial to prove the main result
\begin{theorem}
$I_{\la,\ga}$ is coercive and bounded below on $\M_{\la,\ga}.$
\end{theorem}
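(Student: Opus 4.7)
The plan is to exploit the alternative expression \eqref{eq-2} for $I_{\la,\ga}$ on $\M_{\la,\ga}$, because it already removes the troublesome $\int h|u|^\al|v|^\be$ term. Since H\,\ref{hyp_1} gives $1<q<p<\al+\be$, both factors $\frac{1}{p}-\frac{1}{\al+\be}$ and $\frac{1}{q}-\frac{1}{\al+\be}$ are strictly positive, so on $\M_{\la,\ga}$ we have
\[
I_{\la,\ga}(u,v) = C_1 \|(u,v)\|_{\E_p}^{p} - C_2 \left(\int_{\s}\la a(x)|u|^{q}\dm + \int_{\s}\ga b(x)|v|^{q}\dm\right),
\]
with $C_1,C_2>0$ depending only on $p,q,\al,\be$. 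Everything reduces to controlling the low-order integral term by a power $q<p$ of $\|(u,v)\|_{\E_p}$.

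First I would estimate the $L^1$ pairings crudely by pulling out the sup-norm: by H\,\ref{hyp_3},
\[
\left|\int_{\s}\la a(x)|u|^{q}\dm\right| \leq |\la|\,\|a\|_1\,\|u\|_\infty^{q},
\qquad
\left|\int_{\s}\ga b(x)|v|^{q}\dm\right| \leq |\ga|\,\|b\|_1\,\|v\|_\infty^{q}.
\]
Applying Lemma \ref{lem-5} gives $\|u\|_\infty^q \leq K^q \|u\|_{\E_p}^q$ and similarly for $v$. Because the exponent $q$ is the same on both sides, I can combine the two terms: using \eqref{norm} together with $\|u\|_{\E_p},\|v\|_{\E_p}\leq\|(u,v)\|_{\E_p}$, I obtain
\[
\int_{\s}\la a(x)|u|^{q}\dm + \int_{\s}\ga b(x)|v|^{q}\dm \;\leq\; K^{q}\bigl(|\la|\,\|a\|_1 + |\ga|\,\|b\|_1\bigr)\,\|(u,v)\|_{\E_p}^{q}.
\]

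Substituting back into \eqref{eq-2} yields, for all $(u,v)\in\M_{\la,\ga}$,
\[
I_{\la,\ga}(u,v) \;\geq\; C_1\,\|(u,v)\|_{\E_p}^{p} \;-\; C_2 K^{q}\bigl(|\la|\|a\|_1+|\ga|\|b\|_1\bigr)\,\|(u,v)\|_{\E_p}^{q}.
\]
Since $p>q$, the right-hand side is a function of $t:=\|(u,v)\|_{\E_p}\geq 0$ of the form $C_1 t^{p}-D t^{q}$ with $C_1>0$. Such a function tends to $+\infty$ as $t\to\infty$, giving coercivity, and since it vanishes at $t=0$ and has a unique minimizer on $[0,\infty)$, it is bounded below by an explicit constant (a quick calculus computation of the critical point $t_*=(qD/(pC_1))^{1/(p-q)}$), which bounds $I_{\la,\ga}$ below on $\M_{\la,\ga}$.

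There is no serious obstacle here — the only subtle point is that one must not try to estimate the cross term $\int h|u|^{\al}|v|^{\be}$ directly (it has the wrong sign for coercivity in the original expression for $I_{\la,\ga}$); using \eqref{eq-2} sidesteps this entirely. The hypothesis H\,\ref{hyp_2} is not actually needed for this particular statement (it will be needed later to ensure $\M_{\la,\ga}^0=\emptyset$ and to locate minimizers in $\M_{\la,\ga}^\pm$), but its smallness flavor matches the constant $C_2 K^q(|\la|\|a\|_1+|\ga|\|b\|_1)$ appearing in the lower bound.
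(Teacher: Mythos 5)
Your proposal is correct and follows essentially the same route as the paper: restrict to \eqref{eq-2}, bound the $q$-order integrals by $|\la|\|a\|_1\|u\|_\infty^q + |\ga|\|b\|_1\|v\|_\infty^q$, pass to the energy norm via Lemma \ref{lem-5}, and conclude coercivity and boundedness below from the shape of $t \mapsto C_1 t^p - D t^q$ with $p>q$. If anything, your bookkeeping is slightly cleaner than the paper's: you bound each of $\|u\|_{\E_p}^q$ and $\|v\|_{\E_p}^q$ directly by $\|(u,v)\|_{\E_p}^q$, thereby bypassing the paper's intermediate appeal to \eqref{norm} (whose stated direction $\|u\|_{\E_p}+\|v\|_{\E_p}\le(\|u\|_{\E_p}^p+\|v\|_{\E_p}^p)^{1/p}$ is in fact reversed for $p>1$, though this is harmless since a factor of $2$ suffices), and your explicit critical-point computation makes the bounded-below claim precise where the paper merely asserts it.
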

\begin{proof}
From \eqref{eq-2} and using $1<q<p<\al +\be$, continuity of $u$ and $v$, $a,b \in L^1(\s,\mu)$ and Lemma \ref{lem-5} we get
\begin{align*}
I_{\la,\ga}(u,v) &= \left(\frac{1}{p}-\frac{1}{\al+\be}\right)\|(u,v)\|_{\E_p}^p - \left(\frac{1}{q} - \frac{1}{\al+\be}\right) \left(\int\limits_{\s} \la a(x)|u|^{q} \dm +\int\limits_{\s} \ga b(x)|v|^{q} \dm \right)\\
         &\geq \left(\frac{1}{p}-\frac{1}{\al+\be}\right)\|(u,v)\|_{\E_p}^p - \left(\frac{1}{q} - \frac{1}{\al+\be}\right) \left( |\la| \|a\|_1 \|u\|_\infty^{q} + |\ga| \|b\|_1 \|v\|_\infty^{q} \right)\\
         &\geq \left(\frac{1}{p}-\frac{1}{\al+\be}\right)\|(u,v)\|_{\E_p}^p - \left(\frac{1}{q} - \frac{1}{\al+\be}\right) \left( |\la| K^q \|a\|_1 \|u\|_{\E_p}^{q} + |\ga| K^q\|b\|_1 \|v\|_{\E_p}^{q} \right)\\
         &\geq \left(\frac{1}{p}-\frac{1}{\al+\be}\right)\|(u,v)\|_{\E_p}^p - \left(\frac{1}{q} - \frac{1}{\al+\be}\right)K^q \left( |\la| \|a\|_1 + |\ga| \|b\|_1 \right)(\|u\|_{\E_p}^{q} + \|v\|_{\E_p}^{q})\\
         &\geq \left(\frac{1}{p}-\frac{1}{\al+\be}\right)\|(u,v)\|_{\E_p}^p - \left(\frac{1}{q} - \frac{1}{\al+\be}\right)K^q \left( |\la| \|a\|_1 + |\ga| \|b\|_1 \right)(\|u\|_{\E_p} + \|v\|_{\E_p})^q\\
         &\geq \left(\frac{1}{p}-\frac{1}{\al+\be}\right)\|(u,v)\|_{\E_p}^p - \left(\frac{1}{q} - \frac{1}{\al+\be}\right)K^q \left( |\la| \|a\|_1 + |\ga| \|b\|_1 \right)\|(u,v)\|_{\E_p}^q
\end{align*}
Hence, we conclude that $I_{\la,\ga}(u,v)$ is coercive and bounded below.
\end{proof}
Now we will study the mapping $\Phi_{u,v}$ with respect to our problem. Consider
\begin{align*}
\Phi_{u,v}'(t) &= t^{p-1}\|(u,v)\|_{\E_p}^p - t^{q-1} \left(\int\limits_{\s} \la a(x)|u|^{q} \dm + \int\limits_{\s} \ga b(x)|v|^{q} \dm\right)- t^{\al +\be-1} \int\limits_{\s} h(x)|u|^{\al}|v|^{\be} \dm \\
             &= t^{q-1}\left(t^{p-q}\|(u,v)\|_{\E_p}^p - \int\limits_{\s} \la a(x)|u|^{q} \dm - \int\limits_{\s} \ga b(x)|v|^{q} \dm - t^{\al +\be-q} \int\limits_{\s} h(x)|u|^{\al}|v|^{\be} \dm\right) \\
             &= t^{q-1} \left(M_{u,v}(t) - \int\limits_{\s} \la a(x)|u|^{q} \dm - \int\limits_{\s} \ga b(x)|v|^{q} \dm \right)
\end{align*}
where we define $$M_{u,v}(t) \coloneqq t^{p-q}\|(u,v)\|_{\E_p}^p - t^{\al +\be-q} \int\limits_{\s} h(x)|u|^{\al}|v|^{\be} \dm. $$
We can see that for $t>0$, $(tu,tv) \in \M_{\la,\ga}$ if and only if $t$ is a solution to the below problem
\begin{equation}\label{eq-8}
 M_{u,v}(t) = \int\limits_{\s} \la a(x)|u|^{q} \dm + \int\limits_{\s} \ga b(x)|v|^{q} \dm.
\end{equation}
Further,
\begin{equation}\label{eq-7}
\begin{split}
M_{u,v}'(t) &= (p-q) t^{p-q-1}\|(u,v)\|_{\E_p}^p - (\al+\be-q)t^{\al+\be-q-1}\int\limits_{\s}h(x)|u|^{\al}|v|^{\be} \dm\\
            &= t^{-q-1}\left((p-q) t^{p}\|(u,v)\|_{\E_p}^p - (\al+\be-q)t^{\al+\be}\int\limits_{\s}h(x)|u|^{\al}|v|^{\be} \dm\right).
\end{split}
\end{equation}
We describe the nature of $M_{u,v}(t)$ depending on sign of $\int\limits_{\s}h(x)|u|^{\al}|v|^{\be} \dm$ as below :\\
\uline{Case I} $\int\limits_{\s}h(x)|u|^{\al}|v|^{\be}\dm \leq 0.$ Then from \eqref{eq-7} we can see $M_{u,v}'(t) > 0$ for all $t > 0.$ Hence $M_{u,v}(t)$ is an increasing function.\\
\uline{Case II} $\int\limits_{\s}h(x)|u|^{\al}|v|^{\be}\dm > 0.$ Then, $M_{u,v}(t) \to -\infty$ as $t \to \infty.$ We can obtain that $M_{u,v}'(t)=0$ has only positive solution at $\tilde{t}= \left(\frac{(p-q)\|(u,v)\|_{\E_p}^p}{(\al+\be-q)\int\limits_{\s}h(x)|u|^{\al}|v|^{\be}\dm}\right)^\frac{1}{\al+\be-p}$. After a small calculation we can see that $M_{u,v}''(\tilde{t}) <0,$ so this is a local maximum.
\begin{figure}
\begin{center}
\begin{subfigure}[b]{0.35\textwidth}
	\resizebox{\linewidth}{!}{
\begin{tikzpicture}[domain=0:4]
    \draw[->] (-0.2,0) -- (5,0) node[right] {$t$};
    \draw[dashed] (0,1) -- (5,1) ;
     \draw[dashed] (0,1) -- (0,1) node[left] {$X$};
    \draw[->] (0,-0.2) -- (0,4) node[left] {$M_{u,v}(t)$};
    \draw[color=black]   plot (\x,.05*\x*\x*\x + .3*\x)     ;

  \end{tikzpicture}
  }
  \caption{$\int_{\mathcal S} h(x)|u|^\alpha|v|^\beta \dm \leq 0$ }
  \label{fig-A}
\end{subfigure}
\hspace{.5in}
\begin{subfigure}[b]{0.35\textwidth}
	\resizebox{\linewidth}{!}{
\begin{tikzpicture}[domain=0:4]
    \draw[->] (-0.2,0) -- (5,0) node[right] {$t$};
    \draw[dashed] (0,1) -- (5,1) ;
     \draw[dashed] (0,1) -- (0,1) node[left] {$X$} ;
    \draw[->] (0,-0.2) -- (0,4) node[left] {$M_{u,v}(t)$};
    \draw[color=black]   plot (\x,1.2*\x*\x-0.305*\x*\x*\x)     ;

  \end{tikzpicture}
  }
  \caption{$\int_{\mathcal S} h(x)|u|^\alpha|v|^\beta \dm > 0$}
  \label{fig-B}
\end{subfigure}

\caption{Possible forms of $M_{u,v}(t)$}

\end{center}
\end{figure}
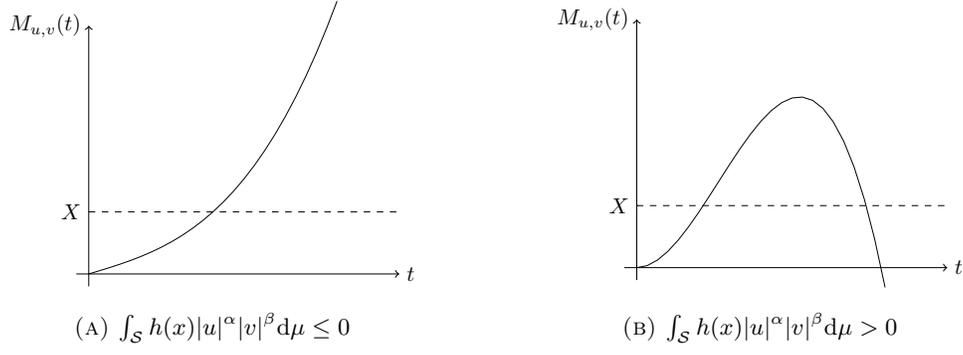

\noi Hence, we rename $\tilde{t} = t_{max}$ and we get
\begin{align*}
M_{u,v}(t_{max}) &= \left(\frac{(p-q)\|(u,v)\|_{\E_p}^p}{(\al+\be-q)\int\limits_{\s}h(x)|u|^{\al}|v|^{\be}\dm}\right)^\frac{p-q}{\al+\be-p}\|(u,v)\|_{\E_p}^p \\
&\quad - \left(\frac{(p-q)\|(u,v)\|_{\E_p}^p}{(\al+\be-q)\int\limits_{\s}h(x)|u|^{\al}|v|^{\be}\dm}\right)^\frac{\al +\be-q}{\al+\be-p} \int\limits_{\s} h(x)|u|^{\al}|v|^{\be} \dm \\
&= \left(\frac{p-q}{\al+\be-q}\right)^\frac{p-q}{\al+\be-p} \|(u,v)\|^\frac{p(\al+\be-q)}{\al+\be-p}\left(\frac{1}{\int\limits_{\s}h(x)|u|^{\al}|v|^{\be}\dm}\right)^\frac{p-q}{\al+\be-p}\\
&\quad  - \left(\frac{p-q}{\al+\be-q}\right)^\frac{\al+\be-q}{\al+\be-p} \|(u,v)\|^\frac{p(\al+\be-q)}{\al+\be-p}\left(\frac{1}{\int\limits_{\s}h(x)|u|^{\al}|v|^{\be}\dm}\right)^\frac{p-q}{\al+\be-p}\\
& = \|(u,v)\|_{\E_p}^q \left(\left(\frac{p-q}{\al+\be-q}\right)^\frac{p-q}{\al+\be-p} - \left(\frac{p-q}{\al+\be-q}\right)^\frac{\al+\be-q}{\al+\be-p}\right) \left(\frac{\|(u,v)\|_{\E_p}^{\al+\be}}{\int\limits_{\s}h(x)|u|^{\al}|v|^{\be}\dm}\right)^\frac{p-q}{\al+\be-p}\\
\end{align*}
\begin{align*}
&\geq \|(u,v)\|_{\E_p}^q \left(\left(\frac{p-q}{\al+\be-q}\right)^\frac{p-q}{\al+\be-p} - \left(\frac{p-q}{\al+\be-q}\right)^\frac{\al+\be-q}{\al+\be-p}\right) \left(\frac{\|(u,v)\|_{\E_p}^{\al+\be}}{\|h\|_1K^{\al+\be}\|u\|_{\E_p}^{\al}\|v\|_{\E_p}^{\be}}\right)^\frac{p-q}{\al+\be-p}\\
&\geq \|(u,v)\|_{\E_p}^q \left(\left(\frac{p-q}{\al+\be-q}\right)^\frac{p-q}{\al+\be-p} - \left(\frac{p-q}{\al+\be-q}\right)^\frac{\al+\be-q}{\al+\be-p}\right) \left(\frac{\|(u,v)\|_{\E_p}^{\al+\be}}{\|h\|_1K^{\al+\be}(\|u\|_{\E_p}+\|v\|_{\E_p})^{\al+\be}}\right)^\frac{p-q}{\al+\be-p}\\
&\geq \|(u,v)\|_{\E_p}^q \left(\left(\frac{p-q}{\al+\be-q}\right)^\frac{p-q}{\al+\be-p} - \left(\frac{p-q}{\al+\be-q}\right)^\frac{\al+\be-q}{\al+\be-p}\right) \left(\frac{\|(u,v)\|_{\E_p}^{\al+\be}}{\|h\|_1K^{\al+\be}\|(u,v)\|_{\E_p}^{\al+\be}}\right)^\frac{p-q}{\al+\be-p}\\
&= \|(u,v)\|_{\E_p}^q \left(\frac{p-q}{\al+\be-q}\right)^\frac{p-q}{\al+\be-p} \left(\frac{\al+\be-p}{\al+\be-q}\right) \left(\frac{1}{\|h\|_1K^{\al+\be}}\right)^\frac{p-q}{\al+\be-p}.\\
\end{align*}
Now, we will establish the relation between $\Phi_{u,v}''$ and $M'_{u,v}.$
If $(tu,tv) \in \M_{\la,\ga},$ then $\Phi'_{u,v}(t) = 0.$ This implies
\begin{align*}
& t^{p-1}\|(u,v)\|_{\E_p}^p - t^{q-1} \left(\int\limits_{\s} \la a(x)|u|^{q} \dm + \int\limits_{\s} \ga b(x)|v|^{q} \dm\right)- t^{\al +\be-1} \int\limits_{\s} h(x)|u|^{\al}|v|^{\be} \dm = 0 \\
  i.e.~~ &t^{q-1} \left(\int\limits_{\s} \la a(x)|u|^{q} \dm + \int\limits_{\s} \ga b(x)|v|^{q} \dm\right) = t^{p-1}\|(u,v)\|_{\E_p}^p - t^{\al +\be-1} \int\limits_{\s} h(x)|u|^{\al}|v|^{\be} \dm.
\end{align*}
Therefore,
\begin{equation}\label{der}
\begin{split}
\Phi_{u,v}''(t) &= (p-1)t^{p-2}\|(u,v)\|_{\E_p}^p - (q-1) t^{q-2} \left(\int\limits_{\s} \la a(x)|u|^{q} \dm + \int\limits_{\s} \ga b(x)|v|^{q}\dm \right)\\
     &\quad  - (\al +\be-1) t^{\al +\be -2} \int\limits_{\s} h(x)|u|^{\al}|v|^{\be} \dm\\
     &= (p-1)t^{p-2}\|(u,v)\|_{\E_p}^p - (q-1) \left(t^{p-2}\|(u,v)\|_{\E_p}^p - t^{\al +\be-2} \int\limits_{\s} h(x)|u|^{\al}|v|^{\be} \dm\right) \\
     & \quad - (\al +\be-1) t^{\al +\be -2} \int\limits_{\s} h(x)|u|^{\al}|v|^{\be} \dm\\
     &= (p-q)t^{p-2}\|(u,v)\|_{\E_p}^p - (\al+\be-q) t^{\al+\be-2} \int_{\s}h(x)|u|^{\al}|v|^\be\dm \\
     &= t^{q-1} M_{u,v}'(t).
\end{split}
\end{equation}
So, $(tu,tv) \in \M_{\la,\ga}^{+}$ if $ \Phi''_{u,v}(t) > 0~~ i.e.~~M_{u,v}'(t) > 0$ and $(tu,tv) \in \M_{\la,\ga}^{-}$ if $ \Phi''_{u,v}(t) < 0~~ i.e.~~M_{u,v}'(t) <0.$
\begin{figure}
\begin{center}
\begin{subfigure}[b]{0.31\textwidth}
	\resizebox{\linewidth}{!}{
\begin{tikzpicture}[domain=0:2.5]
     \draw[->] (-0.5,0) -- (3,0) node[right] {$t$};
     \draw[->] (0,-.5) -- (0,2.5) node[left] {$\phi_u(t)$};
     \draw[color=black]   plot (\x,0.13*\x*\x+0.11*\x*\x*\x)     ;

  \end{tikzpicture}
  }
  \caption{$\bf{X} \leq 0 \text{ and } \bf{H} \leq 0$}
  \label{fig-a}
\end{subfigure}
\hspace{.5in}
\begin{subfigure}[b]{0.31\textwidth}
	\resizebox{\linewidth}{!}{
\begin{tikzpicture}[domain=0:2.5]
    \draw[->] (-0.5,0) -- (3,0) node[right] {$t$};
    \draw[->] (0,-.5) -- (0,2.5) node[left] {$\phi_u(t)$};
    \draw[color=black]   plot (\x,1.15*\x*\x-0.49*\x*\x*\x) ;
  \end{tikzpicture}
  }
  \caption{$\bf{X} \leq 0 \text{ and } \bf{H} > 0$}
  \label{fig-b}
\end{subfigure}
\begin{subfigure}[b]{0.31\textwidth}
	\resizebox{\linewidth}{!}{
\begin{tikzpicture}[domain=0:2.5]
     \draw[->] (-0.5,0) -- (3,0) node[right] {$t$};
     \draw[->] (0,-.5) -- (0,2.5) node[left] {$\phi_u(t)$};
     \draw[color=black]   plot (\x,(-1.5*\x*\x*0.55+0.8*\x*\x*\x*0.55) ;
  \end{tikzpicture}
  }
  \caption{$\bf{X} > 0 \text{ and } \bf{H} \leq 0 $}
  \label{fig-c}
\end{subfigure}
\hspace{.5in}
\begin{subfigure}[b]{0.31\textwidth}
	\resizebox{\linewidth}{!}{
\begin{tikzpicture}[domain=0:2.5]
    \draw[->] (-0.5,0) -- (3,0) node[right] {$t$};
    \draw[->] (0,-.5) -- (0,2.5) node[left] {$\phi_u(t)$};
    \draw[color=black]   plot (\x,-1.0*\x +1.63*\x*\x*\x-0.6*\x*\x*\x*\x) ;

  \end{tikzpicture}
  }
  \caption{$\bf{X} > 0 \text{ and } \bf{H} > 0 $}
  \label{fig-d}
\end{subfigure}
\caption{Possible forms of $\Phi_{u,v}$}
\label{figure-2}
\end{center}
\end{figure}
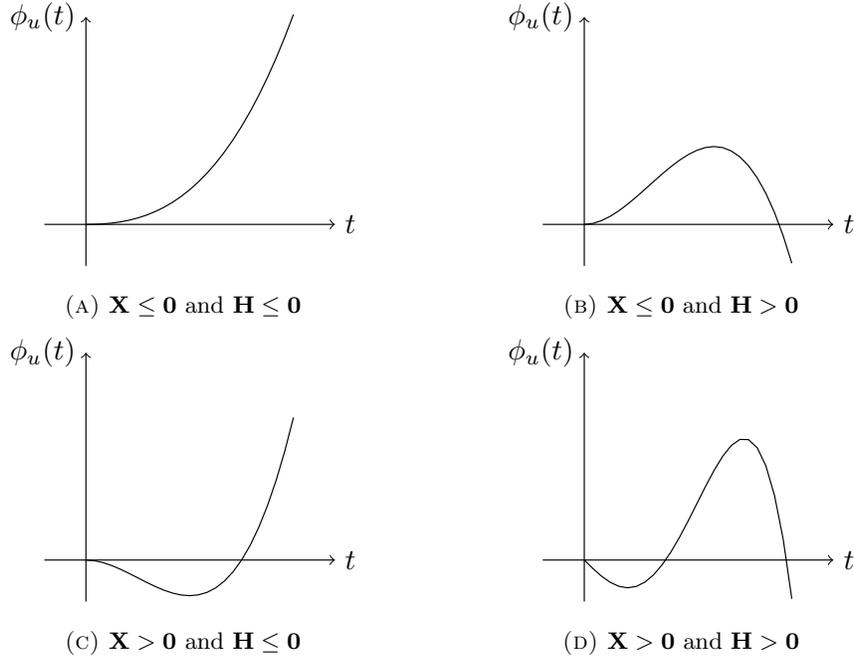
Figure \ref{figure-2} describe all possible forms of fibering map($\Phi_{u,v}$) depending on the sign changing of $X = \int_{\s} \lambda a(x)|u|^q \dm+ \int_{\s} \gamma b(x)|v|^q\dm \text{ and } H = \int_{\mathcal S} h(x)|u|^\alpha|v|^\beta \dm.$
\begin{lemma}\label{lem-6}
\begin{enumerate}[(i)]
\item\label{z1} If $(u,v) \in \M_{\la,\ga}^+,$ then $\int_{\s}\la a(x)|u|^q \dm + \int_{\s}\ga b(x)|v|^q \dm >0.$
\item\label{z2} If $(u,v) \in \M_{\la,\ga}^-,$ then $\int_{\s} h(x)|u|^{\al}|v|^{\be} \dm>0.$
\item\label{z3} If $(u,v) \in \M_{\la,\ga}^0,$ then $\int_{\s}\la a(x)|u|^q \dm + \int_{\s}\ga b(x)|v|^q \dm >0$ and $\int_{\s} h(x)|u|^{\al}|v|^{\be} \dm>0.$
\end{enumerate}
\end{lemma}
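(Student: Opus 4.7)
The plan is to compute $\Phi_{u,v}''(1)$ explicitly, then use the defining identity of $\M_{\la,\ga}$ to eliminate one variable at a time, leaving a two-term expression whose sign forces the desired conclusion.

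First I would write out
\[
\Phi_{u,v}''(1) = (p-1)\|(u,v)\|_{\E_p}^{p} - (q-1) X - (\al+\be-1) H,
\]
where I use the shorthand $X = \int_{\s} \la a(x)|u|^q \dm + \int_{\s} \ga b(x)|v|^q \dm$ and $H = \int_{\s} h(x)|u|^\al |v|^\be \dm$. Since $(u,v) \in \M_{\la,\ga}$, equation \eqref{eq-1} gives the Nehari constraint $\|(u,v)\|_{\E_p}^{p} = X + H$. Substituting $H = \|(u,v)\|_{\E_p}^{p} - X$ yields the identity
\[
\Phi_{u,v}''(1) = (\al+\be - q)\, X - (\al+\be - p)\, \|(u,v)\|_{\E_p}^{p},
\]
while substituting $X = \|(u,v)\|_{\E_p}^{p} - H$ instead yields
\[
\Phi_{u,v}''(1) = (p-q)\, \|(u,v)\|_{\E_p}^{p} - (\al+\be - q)\, H.
\]
By Hypothesis H\ref{hyp_1} all three coefficients $p-q$, $\al+\be-p$, $\al+\be-q$ are strictly positive, and since $(u,v)\ne(0,0)$ and $u,v \in \dom_0(\E_p)$ vanish on $\s_0$, we also have $\|(u,v)\|_{\E_p}^{p} > 0$.

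From here the three cases are immediate. For (\ref{z1}), $\Phi_{u,v}''(1) > 0$ combined with the first identity forces
\[
X \;>\; \frac{\al+\be-p}{\al+\be-q}\,\|(u,v)\|_{\E_p}^{p} \;>\; 0.
\]
For (\ref{z2}), $\Phi_{u,v}''(1) < 0$ combined with the second identity forces
\[
H \;>\; \frac{p-q}{\al+\be-q}\,\|(u,v)\|_{\E_p}^{p} \;>\; 0.
\]
For (\ref{z3}), setting $\Phi_{u,v}''(1) = 0$ in each of the two identities gives $X = \tfrac{\al+\be-p}{\al+\be-q}\|(u,v)\|_{\E_p}^{p} > 0$ and $H = \tfrac{p-q}{\al+\be-q}\|(u,v)\|_{\E_p}^{p} > 0$ simultaneously.

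There is no real obstacle in this proof; the whole argument is a one-line algebraic manipulation once the Nehari identity is used to eliminate a term from $\Phi_{u,v}''(1)$. The only thing to be careful about is verifying that $\|(u,v)\|_{\E_p} > 0$, which follows because $\dom_0(\E_p)$ functions that have zero $p$-energy must be constant, hence zero on $\s_0$, hence identically zero.
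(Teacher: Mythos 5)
Your proof is correct and takes essentially the same route as the paper: substitute the Nehari constraint $\|(u,v)\|_{\E_p}^p = X + H$ into $\Phi_{u,v}''(1)$ to eliminate one term, yielding the two identities $\Phi_{u,v}''(1) = (\al+\be-q)X - (\al+\be-p)\|(u,v)\|_{\E_p}^p = (p-q)\|(u,v)\|_{\E_p}^p - (\al+\be-q)H$, and read off the sign in each of the three cases. The only differences are cosmetic: you derive both identities up front rather than case by case, and you make explicit the positivity of $\|(u,v)\|_{\E_p}$ for $(u,v)\neq(0,0)$ in $\dom_0(\E_p)$, which the paper leaves implicit.
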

\begin{proof}
\eqref{z1} $(u,v) \in \M_{\la,\ga}^{+}$ then $\Phi_{u,v}''(1)>0$
\begin{align*}
  &\implies (p-1)\|(u,v)\|_{\E_p}^p - (q-1)\left(\int\limits_{\s} \la a(x)|u|^{q} \dm + \int\limits_{\s} \ga b(x)|v|^{q} \dm\right)- (\al+\be-1) \int\limits_{\s} h(x)|u|^{\al}|v|^{\be} \dm > 0 \\
  &\implies (p- \al - \be)\|(u,v)\|_{\E_p}^p + (\al+\be-q)\left(\int\limits_{\s} \la a(x)|u|^{q} \dm + \int\limits_{\s} \ga b(x)|v|^{q} \dm\right) > 0\\
  &\implies (\al+\be-q)\left(\int\limits_{\s} \la a(x)|u|^{q} \dm + \int\limits_{\s} \ga b(x)|v|^{q} \dm\right) > ( \al + \be-p)\|(u,v)\|_{\E_p}^p\\
  &\implies \int\limits_{\s} \la a(x)|u|^{q} \dm + \int\limits_{\s} \ga b(x)|v|^{q} \dm > \frac{ \al + \be-p}{\al+\be-q}\|(u,v)\|_{\E_p}^p.
\end{align*}
\eqref{z2} $(u,v) \in \M_{\la,\ga}^{-}$ then $\Phi_{u,v}''(1)<0$
\begin{align*}
  &\implies (p-1)\|(u,v)\|_{\E_p}^p - (q-1)\left(\int\limits_{\s} \la a(x)|u|^{q} \dm + \int\limits_{\s} \ga b(x)|v|^{q} \dm\right)- (\al+\be-1) \int\limits_{\s} h(x)|u|^{\al}|v|^{\be} \dm < 0 \\
  &\implies (p-1)\|(u,v)\|_{\E_p}^p - (q-1)\left(\|(u,v)\|_{\E_p}^p - \int\limits_{\s} h(x)|u|^{\al}|v|^{\be} \dm\right)- (\al+\be-1) \int\limits_{\s} h(x)|u|^{\al}|v|^{\be} \dm < 0 \\
  &\implies (p-q)\|(u,v)\|_{\E_p}^p - (\al+\be-q) \int\limits_{\s} h(x)|u|^{\al}|v|^{\be} \dm < 0 \\
  &\implies (p-q)\|(u,v)\|_{\E_p}^p < (\al+\be-q) \int\limits_{\s} h(x)|u|^{\al}|v|^{\be} \dm  \\
  &\implies \frac{p-q}{\al+\be-q}\|(u,v)\|_{\E_p}^p <  \int\limits_{\s} h(x)|u|^{\al}|v|^{\be} \dm.
\end{align*}
\eqref{z3} $(u,v) \in \M_{\la,\ga}^{-}$ then $\Phi_{u,v}''(1)=0$
\begin{align*}
  &\implies (p-1)\|(u,v)\|_{\E_p}^p - (q-1)\left(\int\limits_{\s} \la a(x)|u|^{q} \dm + \int\limits_{\s} \ga b(x)|v|^{q} \dm\right)- (\al+\be-1) \int\limits_{\s} h(x)|u|^{\al}|v|^{\be} \dm = 0. \\
\end{align*}
From here we get,
\begin{equation}\label{eq-4}
  \int\limits_{\s} \la a(x)|u|^{q} \dm + \int\limits_{\s} \ga b(x)|v|^{q} \dm = \frac{ \al + \be-p}{\al+\be-q}\|(u,v)\|_{\E_p}^p >0
\end{equation}
and
\begin{equation}\label{eq-5}
  \int\limits_{\s} h(x)|u|^{\al}|v|^{\be} \dm = \frac{p-q}{\al+\be-q}\|(u,v)\|_{\E_p}^p >0.
\end{equation}
\end{proof}
\begin{lemma}\label{lem-4}
There exists a real number $\kappa > 0$ such that $ \la\|a\|_1 + \ga\|b\|_1 < \kappa$ then $\M_{\la,\ga}^0 = \emptyset.$
\end{lemma}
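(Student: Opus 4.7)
The plan is a proof by contradiction. I would suppose that some $(u,v) \in \M_{\la,\ga}^0$ exists and then derive bounds on $\|(u,v)\|_{\E_p}$ from above and from below that can only be compatible if $|\la|\|a\|_1 + |\ga|\|b\|_1$ is at least some threshold $\kappa$.

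The starting point is Lemma \ref{lem-6}\eqref{z3}, which on $\M_{\la,\ga}^0$ gives the exact identities \eqref{eq-4} and \eqref{eq-5}. First I would use \eqref{eq-4} together with Lemma \ref{lem-5} (the $L^\infty$--$\E_p$ embedding) and the elementary inequality $\|u\|_{\E_p}^q + \|v\|_{\E_p}^q \leq C_{p,q}\|(u,v)\|_{\E_p}^q$ (power-mean, valid since $q<p$) to estimate
\begin{equation*}
\frac{\al+\be-p}{\al+\be-q}\|(u,v)\|_{\E_p}^p \leq K^q\bigl(|\la|\|a\|_1 + |\ga|\|b\|_1\bigr) C_{p,q}\,\|(u,v)\|_{\E_p}^q,
\end{equation*}
which, since $(u,v) \neq (0,0)$, yields an upper bound of the form
\begin{equation*}
\|(u,v)\|_{\E_p}^{p-q} \leq C_1\bigl(|\la|\|a\|_1 + |\ga|\|b\|_1\bigr),
\end{equation*}
with $C_1$ depending only on $p,q,\al,\be,K$.

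Next I would feed \eqref{eq-5} into the same $L^\infty$--$\E_p$ embedding to bound $\int_{\s} h|u|^{\al}|v|^{\be}\dm \leq \|h\|_1 K^{\al+\be}\|u\|_{\E_p}^{\al}\|v\|_{\E_p}^{\be}$, and then use $\|u\|_{\E_p}^{\al}\|v\|_{\E_p}^{\be} \leq C_{p,\al,\be}\,\|(u,v)\|_{\E_p}^{\al+\be}$ (which follows from $\|u\|_{\E_p},\|v\|_{\E_p} \leq \|(u,v)\|_{\E_p}$ up to constants coming from the norm equivalence already exploited in Section 3). This turns \eqref{eq-5} into a lower bound
\begin{equation*}
\|(u,v)\|_{\E_p}^{\al+\be-p} \geq C_2\,\frac{1}{\|h\|_1},
\end{equation*}
where $C_2 = \frac{p-q}{(\al+\be-q) K^{\al+\be} C_{p,\al,\be}}$; since $\al+\be > p$, this is a genuine lower bound on $\|(u,v)\|_{\E_p}$.

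Finally I would combine the two bounds. Raising each to compatible powers gives explicit positive constants $A,B$ such that
\begin{equation*}
B \leq \|(u,v)\|_{\E_p} \leq A\bigl(|\la|\|a\|_1 + |\ga|\|b\|_1\bigr)^{1/(p-q)}.
\end{equation*}
Defining $\kappa \ce (B/A)^{p-q}$, whenever $|\la|\|a\|_1 + |\ga|\|b\|_1 < \kappa$ the displayed chain is impossible, so $\M_{\la,\ga}^0 = \emptyset$. The argument is largely a bookkeeping exercise; the only mildly delicate point is organizing the norm-equivalence constants correctly (since the inequality $\|u\|_{\E_p}+\|v\|_{\E_p} \leq \|(u,v)\|_{\E_p}$ used earlier in the paper in fact requires an implicit constant depending on $p$), but this does not affect the existence of a suitable $\kappa>0$.
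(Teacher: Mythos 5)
Your proposal is correct and takes essentially the same route as the paper's proof: starting from the identities \eqref{eq-4} and \eqref{eq-5} on $\M_{\la,\ga}^0$, you derive the upper bound $\|(u,v)\|_{\E_p}^{p-q} \leq C_1\left(|\la|\|a\|_1+|\ga|\|b\|_1\right)$ and the lower bound $\|(u,v)\|_{\E_p}^{\al+\be-p} \geq C_2/\|h\|_1$, whose incompatibility below an explicit threshold $\kappa$ yields the contradiction. The only differences are cosmetic: invoking Lemma \ref{lem-6}\eqref{z3} at the outset subsumes the paper's separate Case I (where $\int_{\s} h(x)|u|^{\al}|v|^{\be}\dm \leq 0$), and your caution about the constant hidden in \eqref{norm} is well taken but not actually needed here, since both bounds use only $\|u\|_{\E_p},\|v\|_{\E_p}\leq\|(u,v)\|_{\E_p}$, which holds with constant one.
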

\begin{proof}
We will prove this by considering two cases\\
\uline{Case I} : $(u,v) \in \M_{\la,\ga}$ and $\int_{\s} h(x)|u|^\al|v|^\be \dm \leq 0$ then
$$\Phi_{u,v}''(1) = (p-q)\|(u,v)\|_{\E_p}^p - (\al+\be-q) \int\limits_{\s} h(x)|u|^{\al}|v|^{\be} \dm >0$$
Therefore, $(u,v) \notin \M_{\la,\ga}^0.$\\
\uline{Case II} : $(u,v) \in \M_{\la,\ga}$ and $\int_{\s} h(x)|u|^\al|v|^\be \dm > 0.$\\
Suppose $\M_{\la,\ga}^0 \neq \emptyset$ for all $(\la,\mu) \in \R^2$ satisfying $ \la\|a\|_1 + \ga\|b\|_1 < \kappa.$
Then for each $(u,v) \in \M_{\la,\ga},$ from equation \eqref{eq-5} we get,
$$\|(u,v)\| \geq \left(\frac{p-q}{\al+\be-q} \frac{1}{K^{\al+\be} \|h\|_1}\right)^\frac{1}{\al+\be-p}$$
and from equation \eqref{eq-4} we get,
$$\|(u,v)\| \leq \left(\left(\frac{\al+\be-q}{\al+\be-p}\right)(\la\|a\|_1+\ga\|b\|_1)K^q\right)^{\frac{1}{p-q}}.$$
Above two equations imply
$$\left(\frac{\al+\be-q}{\al+\be-p}(\la\|a\|_1+\ga\|b\|_1)K^q\right)^{\frac{1}{p-q}} \geq \left(\frac{p-q}{\al+\be-q} \frac{1}{K^{\al+\be} \|h\|_1}\right)^\frac{1}{\al+\be-p}.$$
$$i.e.~ \la\|a\|_1+\ga\|b\|_1 \geq \left(\frac{\al+\be-p}{\al+\be-q}\right) K^{-q}\left(\frac{p-q}{\al+\be-q} \frac{1}{K^{\al+\be} \|h\|_1}\right)^\frac{p-q}{\al+\be-p}.$$
where $\kappa \ce\left(\frac{\al+\be-p}{\al+\be-q}\right) K^{-q}\left(\frac{p-q}{\al+\be-q} \frac{1}{K^{\al+\be} \|h\|_1}\right)^\frac{p-q}{\al+\be-p}.$
Then this will be a contradiction. Hence, $\M_{\la,\ga}^0 =\emptyset.$
\end{proof}
\noi By emphasizing the result of Lemma \ref{lem-4} we introduce the set
$$\La \ce \left\{(\la,\mu) \in \R^2 : \la \|a\|_1 + \ga\|g\|_1 < \kappa \right\}$$
Hence for each $(\la,\ga) \in \La,$ we get $\M_{\la,\ga} = \M_{\la,\ga}^+ \cup \M_{\la,\ga}^-.$
\begin{lemma}\label{lem-7}
  If $\int_{\s}h(x)|u|^{\al}|v|^{\be}\dm>0,~\int\limits_{\s} \la a(x)|u|^{q} \dm + \int\limits_{\s} \ga b(x)|v|^{q} \dm >0$ and $\la\|a\|_1 + \ga\|b\|_1 < \kappa$ then there exist $t_0$ and $t_1$ with $0<t_0<t_{max}<t_1$ such that $(t_0u,t_0v) \in \M_{\la,\ga}^+$ and $(t_1u,t_1v) \in \M_{\la,\ga}^-$
\end{lemma}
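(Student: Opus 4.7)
Under the hypothesis $H:=\int_{\s}h(x)|u|^\al|v|^\be\dm>0$, the function $M_{u,v}$ falls in Case II of the preceding analysis: it is continuous on $(0,\infty)$ with $M_{u,v}(t)\to 0$ as $t\to 0^+$, is strictly increasing on $(0,t_{max})$, attains its maximum at $t_{max}$, then strictly decreases with $M_{u,v}(t)\to -\infty$ as $t\to\infty$. By the characterization following \eqref{eq-8}, positive $t$ with $(tu,tv)\in\M_{\la,\ga}$ are exactly the solutions of $M_{u,v}(t)=X$, where $X:=\int_{\s}\la a(x)|u|^q\dm+\int_{\s}\ga b(x)|v|^q\dm$.

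The heart of the proof is the comparison $0<X<M_{u,v}(t_{max})$. The left inequality is one of our hypotheses. For the right inequality, I combine the lower bound on $M_{u,v}(t_{max})$ derived earlier,
\[
M_{u,v}(t_{max})\;\ge\;\|(u,v)\|_{\E_p}^q\left(\frac{p-q}{\al+\be-q}\right)^{\frac{p-q}{\al+\be-p}}\frac{\al+\be-p}{\al+\be-q}\left(\frac{1}{\|h\|_1 K^{\al+\be}}\right)^{\frac{p-q}{\al+\be-p}},
\]
with the upper bound $X\le K^q(|\la|\|a\|_1+|\ga|\|b\|_1)\|(u,v)\|_{\E_p}^q$, which follows from Lemma \ref{lem-5} together with \eqref{norm}. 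Cancelling the common factor $\|(u,v)\|_{\E_p}^q$, the required strict inequality reduces exactly to $\la\|a\|_1+\ga\|b\|_1<\kappa$; the constant $\kappa$ of Lemma \ref{lem-4} was defined precisely so that this works.

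With $0<X<M_{u,v}(t_{max})$ in hand, the intermediate value theorem applied separately on the two monotone branches $(0,t_{max})$ and $(t_{max},\infty)$ produces unique $t_0\in(0,t_{max})$ and $t_1\in(t_{max},\infty)$ with $M_{u,v}(t_i)=X$, and hence $(t_0u,t_0v),(t_1u,t_1v)\in\M_{\la,\ga}$.

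To classify the two critical points, I appeal to the homogeneity identity $\Phi_{tu,tv}(s)=\Phi_{u,v}(ts)$, which is immediate from the definition of $\Phi$; differentiating twice in $s$ and setting $s=1$ gives $\Phi''_{tu,tv}(1)=t^2\Phi''_{u,v}(t)$. Whenever $\Phi'_{u,v}(t)=0$, equation \eqref{der} yields $\Phi''_{u,v}(t)=t^{q-1}M'_{u,v}(t)$, so the sign of $\Phi''_{tu,tv}(1)$ matches the sign of $M'_{u,v}(t)$. Since $M'_{u,v}(t_0)>0$ and $M'_{u,v}(t_1)<0$, we obtain $(t_0u,t_0v)\in\M_{\la,\ga}^+$ and $(t_1u,t_1v)\in\M_{\la,\ga}^-$. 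The main obstacle throughout is the quantitative comparison $X<M_{u,v}(t_{max})$; once the constants are matched with $\kappa$, the rest is straightforward calculus on $M_{u,v}$ and scaling of the fibering map.
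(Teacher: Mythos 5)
Your proof is correct and follows essentially the same route as the paper: the same comparison $0 < X \le K^q(|\la|\|a\|_1+|\ga|\|b\|_1)\|(u,v)\|_{\E_p}^q < M_{u,v}(t_{max})$ via the previously derived lower bound on $M_{u,v}(t_{max})$ and the constant $\kappa$, followed by the two monotone branches of $M_{u,v}$ and the sign relation \eqref{der} to place $(t_0u,t_0v)$ in $\M_{\la,\ga}^+$ and $(t_1u,t_1v)$ in $\M_{\la,\ga}^-$. Your only addition is making explicit the scaling identity $\Phi_{tu,tv}(s)=\Phi_{u,v}(ts)$, hence $\Phi''_{tu,tv}(1)=t^2\Phi''_{u,v}(t)$, which the paper uses implicitly.
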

\begin{proof}
\begin{align*}
  M_{u,v}(0) = 0 &< \int\limits_{\s} \la a(x)|u|^{q} \dm + \int\limits_{\s} \ga b(x)|v|^{q} \dm \\
   &\leq K^q (\la\|a\|_1 + \ga\|b\|_1)\|(u,v)\|_{\E_p}^q\\
   &< K^q \|(u,v)\|_{\E_p}^q \left(\frac{\al+\be-p}{\al+\be-q}\right) K^{-q}\left(\frac{p-q}{\al+\be-q} \frac{1}{K^{\al+\be} \|h\|_1}\right)^\frac{p-q}{\al+\be-p}\\
   &= \|(u,v)\|_{\E_p}^q \left(\frac{\al+\be-p}{\al+\be-q}\right) \left(\frac{p-q}{\al+\be-q} \frac{1}{K^{\al+\be} \|h\|_1}\right)^\frac{p-q}{\al+\be-p}\\
    &\leq M_{u,v}(t_{max})
\end{align*}
Hence, equation \eqref{eq-8} has exactly two solutions $t_0$ and $t_1$(say). Also, we have $M_{u,v}'(t_0) > 0> M_{u,v}'(t_1).$ Hence by using \eqref{der} we get, $(t_0u,t_0v) \in \M_{\la,\ga}^+$ and $(t_1u,t_1v) \in \M_{\la,\ga}^-.$
\end{proof}

\begin{lemma}\label{lem-9}Let $(\la,\ga) \in \La$ then,
\begin{enumerate}[(i)]
\item\label{z4} If $\int_{\s} \la a(x)|u|^q\dm + \int_{\s}\ga b(x)|v|^q \dm >0$ then there exists a $t_2 >0$ such that $(t_2u,t_2v) \in \M_{\la,\ga}^+.$
\item\label{z5} If $\int_{\s}  h(x)|u|^\al |v|^\be \dm >0$ then there exists a $t_3 >0$ such that $(t_3u,t_3v) \in \M_{\la,\ga}^-.$
\end{enumerate}
\end{lemma}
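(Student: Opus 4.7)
The plan is a case analysis on the sign of the integral that is \emph{not} assumed positive in each part, using the shape of $M_{u,v}$ established in the Case I / Case II discussion before Lemma \ref{lem-7}, together with the identity $\Phi_{u,v}''(t) = t^{q-1} M_{u,v}'(t)$ from \eqref{der}.

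For \eqref{z4}, first suppose additionally $\int_{\s} h(x)|u|^{\al}|v|^{\be}\,\dm > 0$. Then both hypotheses of Lemma \ref{lem-7} are met, noting that $(\la,\ga)\in\La$ supplies the smallness condition $\la\|a\|_1 + \ga\|b\|_1 < \kappa$, so the $t_0$ produced there serves as the required $t_2$. Otherwise $\int_{\s} h(x)|u|^{\al}|v|^{\be}\,\dm \leq 0$ and \eqref{eq-7} gives $M_{u,v}'(t) > 0$ for all $t > 0$; hence $M_{u,v}$ is strictly increasing from $0$ to $+\infty$, and \eqref{eq-8} admits a unique positive solution $t_2$ with $M_{u,v}'(t_2) > 0$. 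By \eqref{der}, $\Phi_{u,v}''(t_2) > 0$, so $(t_2u, t_2v) \in \M_{\la,\ga}^+$.

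For \eqref{z5}, first suppose additionally $\int_{\s}\la a(x)|u|^q\,\dm + \int_{\s}\ga b(x)|v|^q\,\dm > 0$. Then Lemma \ref{lem-7} applies and the $t_1$ produced there can be taken as $t_3$. Otherwise this sum is $\leq 0$. Since $\int_{\s} h(x)|u|^{\al}|v|^{\be}\,\dm > 0$ we are in Case II for $M_{u,v}$: it attains a strictly positive maximum at $t_{max}$ and decreases to $-\infty$. As the right-hand side of \eqref{eq-8} is $\leq 0 < M_{u,v}(t_{max})$, the intermediate value theorem yields some $t_3 > t_{max}$ solving \eqref{eq-8}; since $t_3$ lies beyond $t_{max}$ we have $M_{u,v}'(t_3) < 0$, and \eqref{der} gives $\Phi_{u,v}''(t_3) < 0$, placing $(t_3u, t_3v) \in \M_{\la,\ga}^-$.

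No step presents a real obstacle; the only subtlety is ensuring existence of $t_3$ in the second subcase of \eqref{z5} even when the right side of \eqref{eq-8} is zero or negative, which is why one must invoke $M_{u,v}(t) \to -\infty$ together with the intermediate value theorem rather than relying directly on Lemma \ref{lem-7}.
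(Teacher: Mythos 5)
Your proposal is correct and follows essentially the same route as the paper: the identical case split in each part, with Lemma \ref{lem-7} handling the case where both integrals are positive and the shape of $M_{u,v}$ together with \eqref{der} handling the remaining case. You merely make explicit the intermediate value theorem details (in particular that $M_{u,v}(t)\to-\infty$ when $\int_{\s}h(x)|u|^{\al}|v|^{\be}\dm>0$, and $M_{u,v}$ increasing to $+\infty$ otherwise) that the paper's proof leaves implicit.
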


\begin{proof}
\eqref{z4} \uline{Case(a)} : $\int_{\s}  h(x)|u|^\al |v|^\be \dm \leq 0$ then equation \eqref{eq-8} has a solution $t_2$(say) and by equation \eqref{der} we conclude that $(t_2u,t_2v) \in \M_{\la,\ga}^+.$\\
\uline{Case(b)} : $\int_{\s}  h(x)|u|^\al |v|^\be \dm > 0$ then by Lemma \ref{lem-7} there exists $t_2$(say) and $(t_2u,t_2v)$ belongs to $\M_{\la,\ga}^+.$\\
\eqref{z5} \uline{Case(a)} : $\int_{\s} \la a(x)|u|^q\dm + \int_{\s}\ga b(x)|v|^q \dm \leq 0$ then equation \eqref{eq-8} has exactly one positive solution $t_3$(say) and using equation \eqref{der} we get $(t_3u,t_3v) \in \M_{\la,\ga}^-.$\\
\uline{Case(b)} : $\int_{\s} \la a(x)|u|^q\dm + \int_{\s}\ga b(x)|v|^q \dm >0,$ then by Lemma \ref{lem-7} there exist $t_3$(say) and $(t_3u,t_3v) \in \M_{\la,\ga}^-.$\\
This completes the proof.
\end{proof}

\begin{lemma}\label{lem-8}
There exists a positive number $\kappa_0$ such that, if $0< |\la| \|a(x)\|_1 + |\ga| \|b\|_1<\kappa_0$ then there exists $d_0 > 0$ such that $$\inf_{(u,v) \in \M_{\la,\ga}^-} I_{\la,\ga}(u,v) > d_0 > 0.$$
\end{lemma}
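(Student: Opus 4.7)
The strategy is to combine a uniform lower bound on $\|(u,v)\|_{\E_p}$ for pairs in $\M_{\la,\ga}^-$ with a coercivity-type estimate of $I_{\la,\ga}$ in terms of this norm, then pick $\kappa_0$ small enough so that the "good" $p$-term dominates the "bad" $q$-term at and past that lower bound.

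\textbf{Step 1: Uniform lower bound on the norm.} For $(u,v) \in \M_{\la,\ga}^-$, Lemma \ref{lem-6}\eqref{z2} yields
\[
\frac{p-q}{\al+\be-q}\|(u,v)\|_{\E_p}^{p} \;<\; \int\limits_{\s} h(x)|u|^{\al}|v|^{\be}\dm.
\]
Apply Lemma \ref{lem-5} together with $\|u\|_{\E_p},\|v\|_{\E_p} \le \|(u,v)\|_{\E_p}$ to bound the right side by $\|h\|_1 K^{\al+\be}\|(u,v)\|_{\E_p}^{\al+\be}$. Since $\al+\be>p$, rearranging gives
\[
\|(u,v)\|_{\E_p} \;>\; r_0 \ce \left(\frac{p-q}{(\al+\be-q)\|h\|_1 K^{\al+\be}}\right)^{\!1/(\al+\be-p)}.
\]

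\textbf{Step 2: Lower bound on $I_{\la,\ga}$ in terms of the norm.} Using representation \eqref{eq-2} on $\M_{\la,\ga}$, together with $\int \la a|u|^q+\ga b|v|^q \le K^q(|\la|\|a\|_1+|\ga|\|b\|_1)\|(u,v)\|_{\E_p}^q$ (same reasoning as in the coercivity theorem), I obtain, with $r:=\|(u,v)\|_{\E_p}$,
\[
I_{\la,\ga}(u,v)\;\ge\; A\,r^{p} \;-\; B\bigl(|\la|\|a\|_1+|\ga|\|b\|_1\bigr)\,r^{q},
\]
where $A=\tfrac{1}{p}-\tfrac{1}{\al+\be}>0$ and $B=\bigl(\tfrac{1}{q}-\tfrac{1}{\al+\be}\bigr)K^q>0$.

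\textbf{Step 3: Choice of $\kappa_0$ and conclusion.} Set
\[
\kappa_0 \ce \min\!\left\{\kappa,\;\frac{A\,r_0^{\,p-q}}{2B}\right\},
\]
where $\kappa$ is the constant from Lemma \ref{lem-4} (ensuring $\M_{\la,\ga}^{0}=\emptyset$ and hypothesis H \ref{hyp_2} is consistent). For $(u,v)\in\M_{\la,\ga}^-$, Step 1 gives $r>r_0$, hence $A r^{p-q}\ge A r_0^{p-q}$. Combined with $|\la|\|a\|_1+|\ga|\|b\|_1<\kappa_0 \le \tfrac{A r_0^{p-q}}{2B}$, the bracketed expression in
\[
I_{\la,\ga}(u,v)\;\ge\; r^{q}\bigl(A r^{p-q} - B(|\la|\|a\|_1+|\ga|\|b\|_1)\bigr)
\]
is at least $\tfrac{1}{2}A r_0^{p-q}$, so $I_{\la,\ga}(u,v) \ge \tfrac{A}{2}r_0^{p}=: d_0 > 0$, uniformly in $(u,v)$.

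\textbf{Expected obstacle.} The only subtle point is making sure the two bounds on $\|(u,v)\|_{\E_p}$ and on $|\la|\|a\|_1+|\ga|\|b\|_1$ are compatible, i.e.\ that the chosen $\kappa_0$ is genuinely positive and consistent with the earlier constraint $\kappa$ from Lemma \ref{lem-4}. Since $r_0$ depends only on $p,q,\al,\be,K,\|h\|_1$ (not on $\la,\ga$), taking the minimum as above is harmless. Writing out the constants in closed form is routine; the conceptual content is entirely in Steps 1--2.
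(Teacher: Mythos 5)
Your proposal is correct and follows essentially the same route as the paper: the lower bound on $\|(u,v)\|_{\E_p}$ from Lemma \ref{lem-6}\eqref{z2}, the estimate $I_{\la,\ga}(u,v)\ge A\,r^p - B(|\la|\|a\|_1+|\ga|\|b\|_1)r^q$ via \eqref{eq-2} and Lemma \ref{lem-5}, and a smallness condition on $|\la|\|a\|_1+|\ga|\|b\|_1$ making the bracket positive (the paper takes $\kappa_0=\frac{q}{p}\kappa$, exactly the threshold $A r_0^{p-q}/B$, while your extra factor $\tfrac12$ merely shrinks $\kappa_0$ and in return gives a $d_0$ uniform in $(\la,\ga)$, a cosmetic variant).
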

\begin{proof}
Let $(u,v) \in \M_{\la,\ga}^-$ by Lemma \ref{lem-6}\eqref{z2} we have $\int_{\s}h(x)|u|^\al|v|^\be >0$ and
$$\|(u,v)\|_{\E_p} \geq \left(\left(\frac{p-q}{\al+\be-q}\right)\left(\frac{1}{K}\right)^{\al+\be}\frac{1}{\|h\|_1}\right)^\frac{1}{\al+\be-p}.$$
From equation \eqref{eq-2} we get,
\begin{align*}
I_{\la,\ga}(u,v)  &= \left(\frac{1}{p}-\frac{1}{\al+\be}\right)\|(u,v)\|_{\E_p}^p - \left(\frac{1}{q} - \frac{1}{\al+\be}\right) \left(\int\limits_{\s} \la a(x)|u|^{q} \dm +\int\limits_{\s} \ga b(x)|v|^{q} \dm \right)\\
  &\geq \left(\frac{1}{p}-\frac{1}{\al+\be}\right)\|(u,v)\|_{\E_p}^p - \left(\frac{1}{q} - \frac{1}{\al+\be}\right) \left( K^q (|\la|\|a\|_1+|\ga|\|b\|_1)\|(u,v)\|_{\E_p}^q \right)\\
   &= \|(u,v)\|_{\E_p}^q\left(\left(\frac{1}{p}-\frac{1}{\al+\be}\right)\|(u,v)\|_{\E_p}^{p-q} - \left(\frac{1}{q} - \frac{1}{\al+\be}\right)  K^q (|\la|\|a\|_1+|\ga|\|b\|_1) \right)\\
   &\geq\left(\left(\frac{p-q}{\al+\be-q}\right)\left(\frac{1}{K}\right)^{\al+\be}\frac{1}{\|h\|_1}\right)^\frac{q}{\al+\be-p}\\
   &\quad \times \left(\left(\frac{1}{p} -\frac{1}{\al+\be}\right)\left(\left(\frac{p-q}{\al+\be-q}\right)\left(\frac{1}{K}\right)^{\al+\be}\frac{1}{\|h\|_1}\right)^\frac{p-q}{\al+\be-p} - \left(\frac{1}{q} - \frac{1}{\al+\be}\right)  K^q (|\la|\|a\|_1+|\ga|\|b\|_1) \right)\\
   &>0.
\end{align*}
If
\begin{align*}
& 0< \left(\frac{1}{p} -\frac{1}{\al+\be}\right)\left(\left(\frac{p-q}{\al+\be-q}\right)\left(\frac{1}{K}\right)^{\al+\be}\frac{1}{\|h\|_1}\right)^\frac{p-q}{\al+\be-p} - \left(\frac{1}{q} - \frac{1}{\al+\be}\right)  K^q (|\la|\|a\|_1+|\ga|\|b\|_1)\\
\implies &\left(\frac{1}{q} - \frac{1}{\al+\be}\right)  K^q (|\la|\|a\|_1+|\ga|\|b\|_1)< \left(\frac{1}{p} -\frac{1}{\al+\be}\right)\left(\left(\frac{p-q}{\al+\be+q}\right)\left(\frac{1}{K}\right)^{\al+\be}\frac{1}{\|h\|_1}\right)^\frac{p-q}{\al+\be-p}\\
\implies & |\la|\|a\|_1+|\ga|\|b\|_1<  K^{-q}\left(\frac{q(\al+\be)}{\al+\be-q}\right)\left(\frac{1}{p} -\frac{1}{\al+\be}\right)\left(\left(\frac{p-q}{\al+\be+q}\right)\left(\frac{1}{K}\right)^{\al+\be}\frac{1}{\|h\|_1}\right)^\frac{p-q}{\al+\be-p}\\
\implies &|\la|\|a\|_1+|\ga|\|b\|_1<  \left(\frac{q}{p}\right)\left(\frac{\al+\be-p}{\al+\be-q}\right) K^{-q}\left(\left(\frac{p-q}{\al+\be+q}\right)\left(\frac{1}{K}\right)^{\al+\be}\frac{1}{\|h\|_1}\right)^\frac{p-q}{\al+\be-p}\\
\implies & |\la|\|a\|_1+|\ga|\|b\|_1 < \left(\frac{q}{p}\right)\kappa < \kappa.
\end{align*}
 Because $\frac{q}{p} < 1.$ Let define $\kappa_0 \ce \left(\frac{q}{p}\right)\kappa$
and
\begin{align*}
 d_0 \ce &\left(\left(\frac{p-q}{\al+\be-q}\right)\left(\frac{1}{K}\right)^{\al+\be}\frac{1}{\|h\|_1}\right)^\frac{q}{\al+\be-p}\\
&\quad \times \left(\left(\frac{1}{p} -\frac{1}{\al+\be}\right)\left(\left(\frac{p-q}{\al+\be-q}\right)\left(\frac{1}{K}\right)^{\al+\be}\frac{1}{\|h\|_1}\right)^\frac{p-q}{\al+\be-p} - \left(\frac{1}{q} - \frac{1}{\al+\be}\right)  K^q (|\la|\|a\|_1+|\ga|\|b\|_1) \right)
\end{align*}
\end{proof}
Now, we will define the set
$\La_0 \ce \{(\la,\ga) \in \R^2 : \la\|a\|_1 + \ga\|b\|_1 < \kappa_0\}.$
Clearly, $\La_0 \subset \La,$ so for all $(\la,\ga) \in \La_0,~ \M_{\la,\ga} = \M_{\la,\ga}^+ \cup \M_{\la,\ga}^-$ and each subset is nonempty.
\section{Proof of the Main Results}
\begin{theorem}\label{thm-1}
If $(\la,\ga) \in \La$ then there exists a minimizer of $I_{\la,\ga}$ on $\M_{\la,\ga}^{+}.$
\end{theorem}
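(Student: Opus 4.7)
The plan is to apply the direct method of the calculus of variations on $\M^+_{\la,\ga}$. First, I would verify that $c^+ := \inf_{\M^+_{\la,\ga}} I_{\la,\ga} < 0$: on $\M^+$, Lemma \ref{lem-6}\eqref{z1} gives $\int_\s \la a|u|^q\,\dm + \int_\s \ga b|v|^q\,\dm > \frac{\al+\be-p}{\al+\be-q}\|(u,v)\|_{\E_p}^p$, and substituting into \eqref{eq-2} collapses to $I_{\la,\ga}(u,v) < \frac{\al+\be-p}{\al+\be}\bigl(\tfrac{1}{p}-\tfrac{1}{q}\bigr)\|(u,v)\|_{\E_p}^p < 0$ since $p > q$. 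Fix a minimizing sequence $(u_n,v_n) \in \M^+_{\la,\ga}$ with $I_{\la,\ga}(u_n,v_n) \to c^+$; coercivity forces $\sup_n \|(u_n,v_n)\|_{\E_p} < \infty$.

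Next, I would extract a uniformly convergent subsequence. Lemma \ref{lem-2} converts the $\E_p$-bound into equicontinuity (its $K_p\E_p^{1/p}r_p^{m/p}$ estimate across adjacent $m$-cells tends uniformly to zero as $m \to \infty$), while Lemma \ref{lem-5} gives a uniform $L^\infty$ bound. Arzel\`a--Ascoli on the compact gasket then yields, after passing to a subsequence (not relabeled), $(u_n,v_n) \to (u_0,v_0)$ uniformly, with the limit continuous and vanishing on $\s_0$. Lower semicontinuity of $\E_p$ under uniform convergence (a consequence of $\E_p = \sup_m \E_p^{(m)}$ with each $\E_p^{(m)}$ continuous in the finitely many vertex values) places $(u_0,v_0) \in \dom_0(\E_p)\times\dom_0(\E_p)$ with $A := \|(u_0,v_0)\|_{\E_p}^p \le \liminf_n\|(u_n,v_n)\|_{\E_p}^p$. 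Dominated convergence (using $a,b,h \in L^1(\s,\mu)$ and the uniform $L^\infty$ bound) sends the lower-order integrals to their limits: $B_n := \int_\s(\la a|u_n|^q+\ga b|v_n|^q)\,\dm \to B$ and $C_n := \int_\s h|u_n|^\al|v_n|^\be\,\dm \to C$. The constraint $(u_n,v_n) \in \M_{\la,\ga}$ reads $\|(u_n,v_n)\|_{\E_p}^p = B_n + C_n$, and hence $\|(u_n,v_n)\|_{\E_p}^p \to \ell^p := B + C$; in particular $A \le \ell^p$. Nontriviality $(u_0,v_0) \ne (0,0)$ follows, since otherwise $B = C = 0$ and \eqref{eq-2} would force $I_{\la,\ga}(u_n,v_n) \to 0$, contradicting $c^+ < 0$.

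The main obstacle is promoting $A \le \ell^p$ to the equality $A = \ell^p$; since $\dom_0(\E_p)$ is not known to be reflexive, the usual weak-convergence devices are unavailable, and I would resort to the fibering map. Suppose for contradiction $A < \ell^p$. Passing Lemma \ref{lem-6}\eqref{z1} to the limit gives $B > 0$, and Lemma \ref{lem-9}\eqref{z4} supplies $t_0 > 0$ with $(t_0 u_0, t_0 v_0) \in \M^+_{\la,\ga}$, so $\Phi_{u_0,v_0}(t_0) \ge c^+$. Directly,
\begin{equation*}
\Phi_{u_0,v_0}(1) \;=\; \tfrac{A}{p} - \tfrac{B}{q} - \tfrac{C}{\al+\be} \;<\; \tfrac{\ell^p}{p} - \tfrac{B}{q} - \tfrac{C}{\al+\be} \;=\; \lim_n \Phi_{u_n,v_n}(1) \;=\; c^+.
\end{equation*}
Set $\tilde\Phi(t) := \tfrac{t^p\ell^p}{p} - \tfrac{t^q B}{q} - \tfrac{t^{\al+\be}C}{\al+\be}$. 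From $\tilde\Phi'(t) - \Phi_{u_0,v_0}'(t) = t^{p-1}(\ell^p - A) > 0$ and $\Phi_{u_0,v_0}'(t_0)=0$ I get $\tilde\Phi'(t_0) > 0$. Since $\tilde\Phi'(1) = 0$ and $\tilde\Phi''(1) = \lim_n\Phi_{u_n,v_n}''(1) \ge 0$, $t=1$ is the leftmost positive critical point of $\tilde\Phi$, with $\tilde\Phi' > 0$ strictly between its two critical points; hence $t_0 > 1$. But $\Phi_{u_0,v_0}$ is strictly decreasing on $(0,t_0)$ (its left critical point being a local minimum), so $\Phi_{u_0,v_0}(t_0) < \Phi_{u_0,v_0}(1) < c^+$, contradicting $\Phi_{u_0,v_0}(t_0) \ge c^+$. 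Therefore $A = \ell^p$ and $(u_0,v_0) \in \M_{\la,\ga}$. Since Lemma \ref{lem-4} rules out $\M^0_{\la,\ga}$ and $\Phi_{u_0,v_0}''(1) = \lim_n\Phi_{u_n,v_n}''(1) \ge 0$ excludes $\M^-_{\la,\ga}$, we conclude $(u_0,v_0) \in \M^+_{\la,\ga}$; combined with $\|(u_n,v_n)\|_{\E_p}^p \to A$ and $B_n \to B$, $C_n \to C$, this yields $I_{\la,\ga}(u_0,v_0) = c^+$, so $(u_0,v_0)$ is the desired minimizer.
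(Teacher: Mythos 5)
Your proposal is correct and follows essentially the same route as the paper: a minimizing sequence made uniformly convergent via Lemma \ref{lem-2}, Lemma \ref{lem-5} and Arzel\`a--Ascoli, dominated convergence for the lower-order integrals, lower semicontinuity of $\E_p$ from $\E_p=\sup_m\E_p^{(m)}$, and a fibering-map contradiction through Lemma \ref{lem-9}\eqref{z4} to upgrade $\|(u_0,v_0)\|_{\E_p}^p\le \ell^p$ to equality. Your local variations --- the pointwise estimate showing $I_{\la,\ga}<0$ on all of $\M_{\la,\ga}^+$, the explicit limit polynomial $\tilde\Phi$ in place of the paper's $\limsup$-of-derivatives comparison, and the closing verification that $(u_0,v_0)\in\M_{\la,\ga}^+$ via nontriviality and $\M_{\la,\ga}^0=\emptyset$ --- are sound and in fact make explicit some steps the paper leaves implicit.
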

\begin{proof}
As we have shown $I_{\la,\ga}$ is bounded below on $\M_{\la,\ga},$ so on $\M_{\la,\ga}^{+}.$ There exists a sequence $\{(u_n,v_n)\} \subset \M_{\la,\ga}^{+}$ such that
$$\lim\limits_{n \to \infty}I_\la(u_n,v_n) = \inf\limits_{(u,v) \in \M_{\la,\ga}^{+}} I_\la(u,v)$$
Since $I_{\la,\ga}$ is coercive, $\{(u_n,v_n)\}$ is bounded in $\dom_0(\E_p) \times \dom_0(\E_p).$ If $\{(u_n,v_n)\}$ is unbounded then there exists a subsequence $\{(u_k,v_k)\}$ such that $\|(u_k, v_k)\|_{\E_p} \to \infty$ as $k \to \infty$ then $I_{\la,\ga}(u_k,v_k) \to \infty$ as $k \to \infty.$  So, $\inf\limits_{(u,v) \in \M_{\la,\ga}^{+}} I_{\la,\ga}(u,v) = \infty,$ is a contradiction as $\M_{\la,\ga}^+$ is non empty.\\
\uline{Claim} : Sequences of functions $\{u_n\}$ and $\{v_n\}$ are equicontinuous.\\
By Lemma \ref{lem-2}, $|u(x) - u(y)| \leq K_p (\E_p(u))^{1/p} (r_p^{1/p})^m$ whenever $x$ and $y$ belongs to the same or adjacent cells of order $m.$ Let $ \widehat{S} = \sup\{\E_p(u_k)^{1/p} : n \in \N\}.$ Let $\e>$ be given. As $0<r_p<1$ there exists $m \in \N$ such that $K_p \widehat{S}(r_p^{1/p})^{m} < \e.$ Choose $\de =2^{-m}.$ Then $\|x-y\|_2 < \de$ implies that $|u_n(x) - u_n(y)| < \e $ for all $n \in \N.$ Hence $\{u_n\}$ is a equicontinuous family of functions.
As the boundary values are zero, by Lemma \ref{lem-5} $\|u_n\|_\infty < K \widehat{S}$ for all $n\in \N.$ Hence, $\{u_n\}$ is uniformly bounded. By Arzela-Ascoli theorem, there exists a subsequence of $\{u_n\}$ call it $\{u_{n_k}\}$ converging to a continuous function $u_0,$ that is,
$$\|u_{n_k} - u_0\|_\infty \to 0~~ \text{as}~~ k \to \infty.$$
Next we claim that $u_0 \in \dom_0(\E_p).$
$$\E_p(u_0) = \sup\limits_{m} \E_p^{(m)}(u_0) =\sup\limits_{m} \lim\limits_{k \to \infty} \E_p^{(m)}(u_{n_k}) \leq \sup\limits_{m} \limsup\limits_{k \to \infty} \E_p(u_{n_k}) = \limsup\limits_{k \to \infty} \E_p(u_{n_k}). $$
As $\limsup\limits_{k \to \infty} \E_p(u_{n_k}) < +\infty$, we get the claim.\\
Clearly we can see that
$$\lim\limits_{k \to \infty}I_\la(u_{n_k},v_{n_k}) = \inf\limits_{(u,v) \in \M_{\la,\ga}^{+}} I_\la(u,v).$$
By following similar arguments as above there exists a subsequence of $\{v_{n_k}\}$ call it $\{v_{n_{k_l}}\}$ which converges to a continuous function $v_0.$ Also, we have $v_0 \in \dom_0(\E_p).$ Hence $$\lim\limits_{l \to \infty}I_\la(u_{n_{k_l}},v_{n_{k_l}}) = \inf\limits_{(u,v) \in \M_{\la,\ga}^{+}} I_\la(u,v).$$
For convenience in writing we rename the sequence $\{(u_{n_{k_l}},v_{n_{k_l}})\}$ as $\{(u_n,v_n)\}$ which converges to $(u_0,v_0)$ as $n \to \infty,$ that is, $\|(u_n,v_n)-(u_0,v_0)\|_{\infty} \to 0$ as $n \to \infty.$
If we choose $(u,v) \in \dom_0(\E_p) \times \dom_0(\E_p)$ such that $\int_{\s}\la a(x)|u|^{q} \dm + \int_{\s}\ga b(x) |v|^q \dm >0,$ (see Fig. \ref{fig-c} and \ref{fig-d}) then by Lemma \ref{lem-9}\eqref{z4} there exists $t_1>0$ such that $(t_1u, t_1v) \in \M_{\la,\ga}^+$ and $I_{\la,\ga}(t_1u,t_1v) <0.$ Hence, $\inf\limits_{(u,v) \in \M_{\la,\ga}^+} I_\la(u,v) < 0.$\\
By equation \eqref{eq-2}
$$I_{\la,\ga}(u,v)  = \left(\frac{1}{p}-\frac{1}{\al+\be}\right)\|(u,v)\|_{\E_p}^p - \left(\frac{1}{q} - \frac{1}{\al+\be}\right) \left(\int\limits_{\s} \la a(x)|u|^{q} \dm +\int\limits_{\s} \ga b(x)|v|^{q} \dm \right)$$
and so
$$\left(\frac{1}{q} - \frac{1}{\al+\be}\right) \left(\int\limits_{\s} \la a(x)|u_n|^{q} \dm +\int\limits_{\s} \ga b(x)|v_n|^{q} \dm \right) = \left(\frac{1}{p}-\frac{1}{\al+\be}\right)\|(u,v)\|_{\E_p}^p - I_{\la,\ga}(u_n,v_n) \geq - I_{\la,\ga}(u_n,v_n). $$
Taking limit as $n \to \infty$, we see that $\int\limits_{\s} \la a(x)|u_0|^{q} \dm +\int\limits_{\s} \ga b(x)|v_0|^{q} \dm >0.$ So, by Lemma \ref{lem-9}\eqref{z4} there exists $t_0 > 0$ such that $(t_0u_0,t_0v_0) \in \M_{\la,\ga}^+$ and $\Phi'_{u_0,v_0}(t_0) = 0.$
By Lebesgue dominated convergence theorem we have
$$\lim\limits_{n \to \infty} \int\limits_{\s} \la a(x)|u_n|^{q} \dm +\int\limits_{\s} \ga b(x)|v_n|^{q} \dm  = \int\limits_{\s} \la a(x)|u_0|^{q} \dm +\int\limits_{\s} \ga b(x)|v_0|^{q} \dm $$ and
$$\lim\limits_{n \to \infty} \int\limits_{\s} h(x)|u_n|^{\al}|v_n|^{\be} \dm = \int\limits_{\s} h(x)|u_0|^{\al}|v_0|^{\be} \dm.$$
We know $\E_p(u_0) \leq \limsup\limits_{n \to \infty} \E_p(u_n)$ and $\E_p(v_0) \leq \limsup\limits_{n \to \infty} \E_p(v_n).$ If we assume $\|u_0,v_0\|_{\E_p} < \limsup\limits_{n\to\infty}\|u_n,v_n\|_{\E_p}$ then we get $\Phi'_{u_0,v_0}(t) < \limsup\limits_{n \to \infty} \Phi'_{u_n,v_n}(t).$
Since $\{(u_n,v_n)\} \subset \M_{\la,\ga}^{+},~\Phi'_{u_n,v_n}(1)= 0$ for all $n \in \N.$
It follows from the above assumption that $\limsup\limits_{n \to \infty} \Phi '_{u_n,v_n}(t_0) > \Phi '_{u_0,v_0}(t_0) = 0$ which implies that $\Phi '_{u_n,v_n}(t_0) > 0$ for some $n.$ Hence $t_0 > 1$ because $\Phi_{u_n,v_n}'(t) <0$ for all $t<1$ and for all $n \in \N.$
As $(t_0 u_0,t_0v_0) \in \M_{\la,\ga}^+,$ we get the following
\begin{align*}
\inf\limits_{(u,v) \in \M_{\la,\ga}^+} I_{\la,\ga}(u,v) &\leq I_\la(t_0 u_0, t_0v_0) = \Phi_{u_0,v_0}(t_0) < \Phi_{u_0,v_0}(1) < \limsup\limits_{n \to \infty} \Phi_{u_n,v_n}(1)\\
   &= \limsup\limits_{n \to \infty} I_{\la,\ga}(u_n,v_n) = \lim\limits_{n \to \infty} I_{\la,\ga}(u_n,v_n) = \inf\limits_{(u,v) \in \M_{\la,\ga}^+} I_{\la,\ga}(u,v)
\end{align*}
which is a contradiction. Thus $t_0 = 1$ and $\|(u_0,v_0)\|_{\E_p} = \limsup\limits_{n \to \infty} \|(u_n,v_n)\|_{\E_p}.$\\
So, $$I_{\la,\ga}(u_0,v_0) = \limsup\limits_{n \to \infty} I_{\la,\ga}(u_n,v_n) = \lim\limits_{n \to \infty} I_{\la,\ga}(u_n,v_n) = \inf\limits_{(u,v) \in \M_{\la,\ga}^+} I_{\la,\ga}(u,v)$$
Hence, $(u_0,v_0)$ is a minimizer of $I_{\la,\ga}$ on $\M_{\la,\ga}^+.$
\end{proof}

\begin{theorem}\label{thm-2}
If $(\la,\ga) \in \La_0$ then there exists a minimizer of $I_{\la,\ga}$ on $\M_{\la,\ga}^{-}.$
\end{theorem}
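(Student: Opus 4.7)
The plan is to mirror the argument of Theorem \ref{thm-1}, modified to handle the fact that $t=1$ is a local \emph{maximum} of the fibering map for pairs in $\M_{\la,\ga}^-$. The critical new ingredient will be an auxiliary ``limit fibering map'' that enables the final contradiction step.

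I would start by taking a minimizing sequence $\{(u_n,v_n)\}\subset\M_{\la,\ga}^-$ for $I_{\la,\ga}$. Coercivity of $I_{\la,\ga}$ bounds the sequence; Lemmas \ref{lem-2} and \ref{lem-5} then give equicontinuity and uniform boundedness, so Arzel\`a--Ascoli (with diagonal extraction) produces a subsequence $(u_n,v_n)\to(u_0,v_0)$ uniformly, and the lower-semicontinuity computation from Theorem \ref{thm-1} places $(u_0,v_0)\in\dom_0(\E_p)\times\dom_0(\E_p)$. Writing $X(u,v):=\int_\s\la a|u|^q\dm+\int_\s\ga b|v|^q\dm$ and $H(u,v):=\int_\s h|u|^\al|v|^\be\dm$, uniform convergence together with $a,b,h\in L^1(\s,\mu)$ gives $X(u_n,v_n)\to X(u_0,v_0)$ and $H(u_n,v_n)\to H(u_0,v_0)$. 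For nontriviality I combine $\lim I_{\la,\ga}(u_n,v_n)\geq d_0>0$ (Lemma \ref{lem-8}) with the rearrangement of \eqref{eq-3}, $(\tfrac{1}{q}-\tfrac{1}{\al+\be})H(u_n,v_n)=I_{\la,\ga}(u_n,v_n)+(\tfrac{1}{q}-\tfrac{1}{p})\|(u_n,v_n)\|_{\E_p}^p\geq I_{\la,\ga}(u_n,v_n)$, and pass to the limit to get $H(u_0,v_0)>0$. Lemma \ref{lem-9}\eqref{z5} then supplies $t_1>0$ with $(t_1u_0,t_1v_0)\in\M_{\la,\ga}^-$.

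The heart of the proof is promoting the lower-semicontinuity $\|(u_0,v_0)\|_{\E_p}\leq\lim\|(u_n,v_n)\|_{\E_p}=:L$ to equality. I argue by contradiction: assume $\|(u_0,v_0)\|_{\E_p}^p<L^p$ and introduce the limit fibering map
\[
\Psi(t):=\frac{t^p}{p}L^p-\frac{t^q}{q}X(u_0,v_0)-\frac{t^{\al+\be}}{\al+\be}H(u_0,v_0),
\]
the pointwise limit of $\Phi_{u_n,v_n}(t)$, which satisfies $\Psi'(t)-\Phi'_{u_0,v_0}(t)=t^{p-1}(L^p-\|(u_0,v_0)\|_{\E_p}^p)>0$ for $t>0$. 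Then $\Phi'_{u_0,v_0}(t_1)=0$ forces $\Psi'(t_1)>0$, while $\Phi'_{u_n,v_n}(1)=0$ yields $\Psi'(1)=0$, so $t_1\neq 1$; and $\Phi''_{u_n,v_n}(1)<0$ yields $\Psi''(1)=(p-q)L^p-(\al+\be-q)H(u_0,v_0)\leq 0$. The degenerate case $\Psi''(1)=0$ must be ruled out: writing $\Psi'(t)=t^{q-1}g(t)$ with $g(t)=L^p t^{p-q}-X(u_0,v_0)-H(u_0,v_0)t^{\al+\be-q}$, the two conditions $g(1)=g'(1)=0$ force the unique maximum of $g$ to equal $0$, whence $g\leq 0$ everywhere and $\Psi'(t_1)\leq 0$, a contradiction. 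Hence $\Psi''(1)<0$, so $t=1$ is a strict local maximum of $\Psi$, and $\Psi'(t_1)>0$ places $t_1$ strictly between the local minimum of $\Psi$ and its local maximum at $1$, giving $\Psi(t_1)<\Psi(1)$. Combining,
\[
\inf_{\M_{\la,\ga}^-}I_{\la,\ga}\leq\Phi_{u_0,v_0}(t_1)<\Psi(t_1)<\Psi(1)=\lim_{n\to\infty}I_{\la,\ga}(u_n,v_n)=\inf_{\M_{\la,\ga}^-}I_{\la,\ga},
\]
a contradiction. So $\|(u_0,v_0)\|_{\E_p}=L$; passing to the limit in $\Phi'_{u_n,v_n}(1)=0$ and $\Phi''_{u_n,v_n}(1)<0$ then places $(u_0,v_0)\in\M_{\la,\ga}^-$ with $I_{\la,\ga}(u_0,v_0)=\inf_{\M_{\la,\ga}^-}I_{\la,\ga}$.

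The main obstacle, absent from Theorem \ref{thm-1}, is exactly this third step. For $\M_{\la,\ga}^+$ one had $t_0>1$ with $\Phi_{u_0,v_0}$ decreasing on $(0,t_0)$, so $\Phi_{u_0,v_0}(t_0)<\Phi_{u_0,v_0}(1)$ supplied the contradiction directly; here $t_1$ turns out to be a local \emph{max} of $\Phi_{u_0,v_0}$, so the naive inequality goes the wrong way and the comparison must be routed through the limit map $\Psi$. The delicate technical point is ruling out the degenerate case $\Psi''(1)=0$ via the double-root analysis of $g$.
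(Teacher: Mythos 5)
Your proposal is correct, and its skeleton is the paper's: minimizing sequence in $\M_{\la,\ga}^-$, coercivity plus Lemmas \ref{lem-2} and \ref{lem-5} and Arzel\`a--Ascoli to extract a uniform limit in $\dom_0(\E_p)\times\dom_0(\E_p)$, positivity of $\int_{\s}h(x)|u|^{\al}|v|^{\be}\dm$ in the limit via \eqref{eq-3} and the bound $d_0$ of Lemma \ref{lem-8}, Lemma \ref{lem-9}\eqref{z5} to produce $t_1$, and a contradiction from a strict drop of the norm in the limit. Where you genuinely diverge is the implementation of that contradiction. The paper never forms the limit map $\Psi$: from the assumed strict inequality it gets $\limsup_n\Phi'_{u_n,v_n}(t_1)>\Phi'_{u_1,v_1}(t_1)=0$, hence $\Phi'_{u_n,v_n}(t_1)>0$ for some $n$, which forces $t_1<1$ since $\Phi'_{u_n,v_n}<0$ on $(1,\infty)$ for members of $\M_{\la,\ga}^-$; it then compares against the \emph{sequence} maps directly, $\Phi_{u_1,v_1}(t_1)<\limsup_n\Phi_{u_n,v_n}(t_1)\leq\limsup_n\Phi_{u_n,v_n}(1)=\inf$, using that $t=1$ maximizes each $\Phi_{u_n,v_n}$ (here $\Phi_{u_n,v_n}(1)=I_{\la,\ga}(u_n,v_n)>d_0>0$ matters). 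This sidesteps your most delicate step entirely: no claim $\Psi''(1)<0$ is ever needed, so your double-root analysis of $g$ excluding $\Psi''(1)=0$ --- correct, but extra work --- is unnecessary in the paper's route. Your version buys a cleaner single limiting object with a self-contained shape analysis; the paper's buys brevity.

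Two small repairs to your write-up. First, at the end, passing to the limit in $\Phi''_{u_n,v_n}(1)<0$ only yields $\Phi''_{u_0,v_0}(1)\leq 0$, i.e.\ $(u_0,v_0)\in\M_{\la,\ga}^-\cup\M_{\la,\ga}^0$; you must invoke Lemma \ref{lem-4} ($\M_{\la,\ga}^0=\emptyset$ on $\La_0\subset\La$), as the paper does, to conclude membership in $\M_{\la,\ga}^-$. Second, when $X(u_0,v_0)\leq 0$ the map $\Psi$ has no interior local minimum, so phrase the last step as: $\Psi'>0$ on $(t_1,1)$ by unimodality of $M$, hence $\Psi(t_1)<\Psi(1)$; the conclusion is unaffected.
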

\begin{proof}
By Lemma \ref{lem-8} for all $(\la,\ga) \in \La_0$
$$\inf_{(u,v) \in \M_{\la,\ga}^-} I_{\la,\ga}(u,v) > d_0 >0.$$
So, $I_{\la,\ga}$ is bounded below on $\M_{\la,\ga}^-.$ Hence, there exists a sequence $\{(u_n,v_n)\} \subset \M_{\la,\ga}^-$ such that
$$\lim_{n\to \infty} I_{\la,\ga}(u_n,v_n)= \inf_{(u,v) \in \M_{\la,\ga}^-} I_{\la,\ga}(u,v)$$
By similar arguments as in Theorem \ref{thm-1}, there exists a subsequence of $\{(u_n,v_n)\}$ still call it $\{(u_n,v_n)\}$ which converges to $(u_1,v_1),$ that is, $\lim\limits_{n\to\infty}\|u_n-u_1\|_\infty = 0, \lim\limits_{n\to\infty}\|v_n-v_1\|_\infty = 0$ and $\lim\limits_{n\to\infty}\|(u_n,v_n) - (u_1,v_1)\|_\infty =0.$ Also we get $(u_1,v_1) \in \dom_0(\E_p) \times \dom_0(\E_p)$ and
$$\lim_{n\to \infty} I_{\la,\ga}(u_n,v_n) = \inf_{(u,v) \in \M_{\la,\ga}^-} I_{\la,\ga}(u,v).$$
From equation \eqref{eq-3} we get,
$$\left(\frac{1}{q} - \frac{1}{\al +\be}\right)\int\limits_{\s} h(x)|u_n|^{\al}|v_n|^{\be} \dm = I_{\la,\ga}(u_n,v_n)- \left(\frac{1}{p}- \frac{1}{q}\right)\|(u_n,v_n)\|_{\E_p}^p \geq  I_{\la,\ga}(u_n,v_n).$$
So, by taking limit as $n \to \infty$ we get,
$$\int\limits_{\s} h(x)|u_1|^{\al}|v_1|^{\be} \dm \geq \left(\frac{q(\al+\be)}{\al+\be-q}\right)\inf_{(u,v) \in \M_{\la,\ga}^-} I_{\la,\ga}(u,v)  \geq\left(\frac{q(\al+\be)}{\al+\be-q}\right) d_0>0.$$
By Lemma \ref{lem-9}\eqref{z5} there exists a $t_1$ such that $(t_1u_1,t_1v_1) \in \M_{\la,\ga}^-$ and $\Phi_{u_1,v_1}'(t_1) = 0.$ By Lebesgue dominated convergence theorem we have
$$\lim\limits_{n \to \infty} \int\limits_{\s} \la a(x)|u_n|^{q} \dm +\int\limits_{\s} \ga b(x)|v_n|^{q} \dm  = \int\limits_{\s} \la a(x)|u_1|^{q} \dm +\int\limits_{\s} \ga b(x)|v_1|^{q} \dm $$ and
$$\lim\limits_{n \to \infty} \int\limits_{\s} h(x)|u_n|^{\al}|v_n|^{\be} \dm = \int\limits_{\s} h(x)|u_1|^{\al}|v_1|^{\be} \dm.$$
We know $\E_p(u_1)\leq \limsup\limits_{n\to \infty}\E_p(u_n)$ and $\E_p(v_1)\leq \limsup\limits_{n\to \infty}\E_p(v_n).$ But, if we assume $\|(u_1,v_1)\|_{\E_p} < \limsup\limits_{n\to\infty} \|(u_n,v_n)\|_{\E_p}$ then we get $\Phi_{u_1,v_1}'(t) < \limsup\limits_{n\to\infty}\Phi_{u_n,v_n}'(t).$ Then, $\limsup\limits_{n\to\infty} \Phi_{u_n,v_n}'(t_1) > \Phi_{u_1,v_1}'(t_1) = \Phi_{t_1u_1,t_1v_1}'(1) =0$ because $(t_1,u_1,t_1v_1) \in \M_{\la,\ga}^-.$ This implies that
$\Phi_{u_n,v_n}'(t_1)>0$ for some $n \in \N.$ Hence $t_1 < 1$ because $\Phi_{u_n,v_n}'(t) < 0$ for all $t>1$ and for all $n \in \N.$ As $(t_1u_1,t_1v_1) \in
\M_{\la,\ga}^-$ we get the following
\begin{align*}
  I_{\la,\ga}(t_1u_1,t_1v_1) &= \Phi_{u_1,v_1}(t_1) < \limsup_{n\to\infty} \Phi_{u_n,v_n}(t_1) \leq \limsup_{n\to\infty} \Phi_{u_n,v_n}(1) \\ &=\limsup_{n\to\infty} I_{\la,\ga}(u_n,v_n)  = \lim_{n\to\infty} I_{\la,\ga}(u_n,v_n) = \inf_{(u,v) \in \M_{\la,\ga}^-} I_{\la,\ga}(u,v), \\
\end{align*}
which is a contradiction. Hence $\|(u_1,v_1)\|_{\E_p} = \limsup\limits_{n\to\infty} \|(u_n,v_n)\|_{\E_p}.$ So, $\Phi_{u_1,v_1}'(1) = 0$ and $\Phi_{u_1,v_1}''(1) \leq 0.$ But, by Lemma \ref{lem-4} we have $\M_{\la,\ga}^0 = \emptyset$ for $(\la,\ga) \in \La_0.$ Hence, $\Phi_{u_1,v_1}''(1) < 0.$ So, $t_1 = 1$ as it has unique local maxima. Hence, $(u_1,v_1) \in \M_{\la,\ga}^-$ and
$$I_{\la,\ga}(u_1,v_1) = \Phi_{u_1,v_1}(1) = \limsup_{n\to\infty} \Phi_{u_n,v_n}(1) = \limsup_{n\to\infty} I_{\la,\ga}(u_n,v_n)  = \lim_{n\to\infty} I_{\la,\ga}(u_n,v_n) = \inf_{(u,v) \in \M_{\la,\ga}^-} I_{\la,\ga}(u,v).$$
Hence, $(u_1,v_1)$ is a minimizer of $I_{\la,\ga}$ on $\M_{\la,\ga}^-.$
\end{proof}
\begin{lemma}\label{lem-3}
For each $(w_1, w_2) \in \dom_0(\E_p) \times \dom_0(\E_p),$
\begin{enumerate}
\item\label{1} Let $(u_0,v_0) \in \M_{\la,\ga}^{+}$ and $I_{\la,\ga}(u_0,v_0) = \inf\limits_{(u,v) \in \M_{\la,\ga}^{+}}I_{\la,\ga}(u,v).$ There exists $\e_0 > 0$ such that for each $\e \in (-\e_0, \e_0)$ there exists unique $t_\e>0$ such that $t_\e(u_0 + \e w_1, v_0+\e w_2) \in \M_{\la,\ga}^{+}.$ Also, $t_\e \to 1$ as $\e \to 0.$
\item\label{2} Let $(u_1,v_1) \in \M_{\la,\ga}^{-}$ and $I_{\la,\ga}(u_1,v_1) = \inf\limits_{(u,v) \in \M_{\la,\ga}^{-}}I_{\la,\ga}(u,v).$ There exists $\e_1 > 0$ such that for each $\e \in (-\e_1, \e_1)$ there exists unique ${t_\e}>0$ such that ${t_\e}(u_1 + \e w_1, v_1+\e w_2) \in \M_{\la,\ga}^{-},$  Also, ${t_\e} \to 1$ as $\e \to 0.$
\end{enumerate}
\end{lemma}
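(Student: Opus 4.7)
The plan is to reduce both claims to a one-variable implicit equation in $t$ and solve it by a continuity-plus-intermediate-value argument, since the classical implicit function theorem is not directly available: the energy norm $\|\cdot\|_{\E_p}$ is not known to be differentiable along the segment $u_0+\e w_1$, so we cannot differentiate the fibering equation in $\e$. Write $U_\e\ce u_0+\e w_1$ and $V_\e\ce v_0+\e w_2$ and define
\[
\Psi(\e,t)\ce \Phi'_{U_\e,V_\e}(t)=t^{p-1}\|(U_\e,V_\e)\|_{\E_p}^p - t^{q-1}\!\int_\s (\la a|U_\e|^q+\ga b|V_\e|^q)\dm - t^{\al+\be-1}\!\int_\s h|U_\e|^\al|V_\e|^\be\dm.
\]
By Lemma \ref{lem-1}, $(tU_\e,tV_\e)\in\M_{\la,\ga}$ exactly when $\Psi(\e,t)=0$ for $t>0$, and $\Psi(0,1)=0$ since $(u_0,v_0)\in\M_{\la,\ga}$.

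Next I verify the regularity of $\Psi$. For each fixed $\e$, $\Psi(\e,\cdot)$ is a $C^\infty$ combination of powers of $t$. For each fixed $t$, $\Psi(\cdot,t)$ is continuous in $\e$: the map $\e\mapsto\|(U_\e,V_\e)\|_{\E_p}$ is Lipschitz by the triangle inequality, while $U_\e\to u_0$ and $V_\e\to v_0$ uniformly on $\s$ by Lemma \ref{lem-5}, so the three integrals converge by dominated convergence (using $a,b,h\in L^1(\s,\mu)$ and the uniform bound $\|U_\e\|_\infty\le \|u_0\|_\infty+|\e|K\|w_1\|_{\E_p}$, similarly for $V_\e$). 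The same argument shows that $\partial_t\Psi$ is jointly continuous in $(\e,t)$. At $(\e,t)=(0,1)$ we have $\partial_t\Psi(0,1)=\Phi''_{u_0,v_0}(1)$, which is strictly positive in case (1) and strictly negative in case (2).

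For part (1), positivity of $\partial_t\Psi(0,1)$ gives, via joint continuity, a $\delta>0$ and $\e_0>0$ on which $\partial_t\Psi(\e,t)>0$; in particular $\Psi(0,1-\delta)<0<\Psi(0,1+\delta)$, and this strict inequality persists as $\Psi(\e,1\pm\delta)$ after shrinking $\e_0$ by continuity in $\e$. The intermediate value theorem then produces a unique $t_\e\in(1-\delta,1+\delta)$ with $\Psi(\e,t_\e)=0$, unique in that window by strict monotonicity of $\Psi(\e,\cdot)$ there. Shrinking $\delta\downarrow 0$ forces $t_\e\to 1$. Finally, the identity $\Phi_{tU_\e,tV_\e}(s)=\Phi_{U_\e,V_\e}(st)$ yields $\Phi''_{t_\e U_\e,t_\e V_\e}(1)=t_\e^2\,\partial_t\Psi(\e,t_\e)>0$, so $t_\e(U_\e,V_\e)\in\M_{\la,\ga}^+$. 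Part (2) is the same argument with reversed signs: $\partial_t\Psi(0,1)<0$ gives $\Psi(0,1-\delta)>0>\Psi(0,1+\delta)$, and the resulting $t_\e$ satisfies $\Phi''_{t_\e U_\e,t_\e V_\e}(1)<0$, placing $t_\e(U_\e,V_\e)$ in $\M_{\la,\ga}^-$.

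The main obstacle is precisely the non-differentiability issue flagged in Section~2, which forbids a direct appeal to the implicit function theorem. The remedy is to keep all $\e$-dependence inside quantities for which continuity suffices (the $\E_p$-norm and the three $L^1$-integrals) and to push all differentiation onto the $t$-variable, where $\Psi$ is a smooth function. Global uniqueness of $t_\e$ on $(0,\infty)$, if desired beyond the local window, is already secured by the analysis of $M_{U_\e,V_\e}$ following \eqref{eq-7}, which shows that the fibering map has at most one critical point in $\M^+$ and at most one in $\M^-$.
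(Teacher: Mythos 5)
Your proof is correct, and at its core it uses the same strategy as the paper's: confine all the $\e$-dependence to four real-valued quantities that are merely continuous in $\e$ (the energy norm and the three integrals, via the Lipschitz property of the norm, Lemma \ref{lem-5}, and dominated convergence), and do all differentiation in the scalar variable $t$, where the fibering equation is smooth. The difference lies in how the scalar equation is then solved. The paper treats those four quantities as parameters of the smooth function $\f(a,b,c,d,t)=at^{p-1}-t^{q-1}(\la b+\ga c)-dt^{\al+\be-1}$ and invokes the finite-dimensional implicit function theorem at the point where $\partial_t\f=\Phi''_{u_0,v_0}(1)\neq 0$, obtaining a continuous solution map $g$ whose continuity gives $t_\e\to 1$; you instead re-derive the needed one-variable implicit function theorem by hand, via joint continuity of $\partial_t\Psi$, strict monotonicity of $\Psi(\e,\cdot)$ on a window around $t=1$, and the intermediate value theorem. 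These are mathematically equivalent (your argument is essentially the standard proof of the one-dimensional IFT), but your version is more self-contained and is in fact more careful on a point the paper glosses over: the paper obtains existence of $t_\e$ from Lemma \ref{lem-9} and then identifies this $t_\e$ with $g(y_\e)$, which tacitly requires the solution to lie in the uniqueness neighborhood furnished by the IFT. You close exactly this gap by noting, from the analysis of $M_{u,v}$ following \eqref{eq-7} and the identity \eqref{der}, that for fixed $(u,v)\neq(0,0)$ the fibering map has at most one critical point in $\M_{\la,\ga}^{+}$ and at most one in $\M_{\la,\ga}^{-}$, which simultaneously yields the global uniqueness of $t_\e$ asserted in the lemma. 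Your verification that the constructed root indeed lands in $\M_{\la,\ga}^{+}$ (resp.\ $\M_{\la,\ga}^{-}$) via the scaling identity $\Phi''_{t_\e U_\e,t_\e V_\e}(1)=t_\e^2\,\Phi''_{U_\e,V_\e}(t_\e)$ is also correct, and replaces the paper's appeal to Lemma \ref{lem-9} for membership in the right component.
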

\begin{proof}
(1) Let us define a function
$\f : \R^4 \times (0,\infty)   \to \R $ by
$$\f(a,b,c,d,t) = at^{p-1} - t^{q-1}(\la b + \ga c) - d t^{\al +\be -1}.$$ Then,
$$\frac{\partial \f}{\partial t}(a,b,c,d,t) = (p-1)t^{p-2} a - (q-1) t^{q-2}(\la b + \ga c) - (\al +\be -1) t^{\al+\be-2} d$$
Since $(u_0, v_0) \in \M_{\la,\ga}^+,$ so $\Phi'_{u_0, v_0}(1) =0$ and $\Phi''_{u_0, v_0}(1) >0$, that is,
$$\f\left(\|(u_0,v_0)\|_{\E_p}^p ,\int\limits_{\s}a(x)|u_0|^{q} \dm, \int\limits_{\s}b(x)|v_0|^{q} \dm, \int\limits_{\s} h(x)|u_0|^{\al} |v_0|^{\be} \dm,1 \right) = \Phi'_{u_0, v_0}(1) = 0$$
and
$$\frac{\partial \f}{\partial t}\left(\|(u_0,v_0)\|_{\E_p}^p ,\int\limits_{\s}a(x)|u_0|^{q} \dm, \int\limits_{\s}b(x)|v_0|^{q} \dm, \int\limits_{\s} h(x)|u_0|^{\al} |v_0|^{\be} \dm, 1 \right) = \Phi''_{u_0, v_0}(1) > 0$$ respectively.
The function $\f_1(\e) = \int\limits_{\s}\la a(x)|u_0 + \e w_1|^{q} \dm + \int\limits_{\s}\ga b(x)|v_0 + \e w_2|^{q} \dm$ is a continuous function and $\f_1(0) > 0$ because $(u_0, v_0) \in \M_{\la,\ga}^{+}.$ By property of continuous functions there exists $\e_0 >0$ such that $\f_1(\e) >0$ for all $\e \in (-\e_0, \e_0).$ So, for each $\e \in (-\e_0, \e_0)$ by Lemma \ref{lem-6}\eqref{z4} there exists $t_\e>0$ such that $t_\e(u_0 + \e w_1, v_0 + \e w_2) \in \M_{\la,\ga}^+.$ This implies
\begin{align*}
&\f\left(\|(u_0 + \e w_1, v_0 + \e w_2)\|_{\E_p}^p ,\int\limits_{\s}a(x)|u_0 + \e w_1|^{q} \dm, \int\limits_{\s} b(x)|v_0 + \e w_2|^{q} \dm, \int\limits_{\s} h(x)|u_0 + \e w_1|^{\al} |v_0 + \e w_2|^{\be} \dm, t_\e \right) \\
&\quad = \Phi'_{u_0 + \e w_1, v_0+\e w_2}(t_\e) = 0.
\end{align*}
By implicit function theorem there exists an open set $A \subset (0,\infty)$ containing 1, an open set $B \subset \R^4$ containing $(\|(u_0,v_0)\|_{\E_p}^p, \int\limits_{\s}a(x)|u_0|^{q} \dm, \int\limits_{\s}b(x)|v_0|^{q} \dm,\int\limits_{\s} h(x)|u_0|^{\al}|v_0|^{\be} \dm )$ and a continuous function $g : B \to A$ such that for all $y \in B$, $\f(y, g(y)) = 0.$ So there is a solution to the equation $t = g(y) \in A.$
Hence,
$$t_\e = g\left(\|(u_0 + \e w_1, v_0 + \e w_2)\|_{\E_p}^p ,\int\limits_{\s}a(x)|u_0 + \e w_1|^{q} \dm, \int\limits_{\s} b(x)|v_0 + \e w_2|^{q} \dm, \int\limits_{\s} h(x)|u_0 + \e w_1|^{\al} |v_0 + \e w_2|^{\be} \dm\right)$$
As $\e \to 0$ using continuity of $g$ we get
$$1 = g\left(\|u_0, v_0\|_{\E_p}^p, \int\limits_{\s}a(x)|u_0|^{q} \dm, \int\limits_{\s} b(x)|v_0|^{q} \dm, \int\limits_{\s} h(x)|u_0|^{\al} |v_0|^{\be} \dm\right)$$
Therefore, $t_\e \to 1$ as $\e \to 0.$\\
(2) This can be proved in similar fashion as in (1), instead of taking $\f_1(\e)$ we have to take
$$\f_2(\e) = \int\limits_{\s} h(x)|u_1+\e w_1|^{\al} |v_1+\e w_2|^{\be} \dm$$ and proceed as above.
\end{proof}
\begin{theorem}\label{thm-8}
If $(u_0, v_0)$ is a minimizer of $I_{\la,\ga}$ on $\M_{\la,\ga}^{+}$ then $(u_0, v_0)$ is a weak solution to the problem \eqref{prob}.
\end{theorem}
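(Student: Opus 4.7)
The plan is to convert the fact that $(u_0,v_0)$ minimizes $I_{\la,\ga}$ on $\M_{\la,\ga}^+$ into the interval-valued identity defining a weak solution, via a first-variation argument that accommodates the non-differentiability of $\E_p$. Fix arbitrary test functions $(\phi_1,\phi_2)\in\dom_0(\E_p)\times\dom_0(\E_p)$, set $U_\e\ce u_0+\e\phi_1$, $V_\e\ce v_0+\e\phi_2$, and invoke Lemma \ref{lem-3}(1) to obtain $\e_0>0$ and a family $\{t_\e\}$ of positive reals with $t_0=1$, $t_\e\to 1$ as $\e\to 0$, and $(t_\e U_\e,t_\e V_\e)\in\M_{\la,\ga}^+$. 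Minimality then forces
\[F(\e)\ce I_{\la,\ga}(t_\e U_\e,t_\e V_\e)-I_{\la,\ga}(u_0,v_0)\ge 0,\qquad F(0)=0,\]
so $\liminf_{\e\to 0^+}F(\e)/\e\ge 0$ and $\limsup_{\e\to 0^-}F(\e)/\e\le 0$.

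Next, I would split $F=G+H$, where $H(\e)\ce I_{\la,\ga}(U_\e,V_\e)-I_{\la,\ga}(u_0,v_0)$ isolates the variation in the direction $(\phi_1,\phi_2)$ and
\[G(\e)=\frac{t_\e^p-1}{p}\|(U_\e,V_\e)\|_{\E_p}^p-\frac{t_\e^q-1}{q}\Bigl(\la\!\int_\s a|U_\e|^q\dm+\ga\!\int_\s b|V_\e|^q\dm\Bigr)-\frac{t_\e^{\al+\be}-1}{\al+\be}\!\int_\s h|U_\e|^\al|V_\e|^\be\dm\]
isolates the rescaling. Using $t_\e^k-1=(t_\e-1)(t_\e^{k-1}+\cdots+1)$ one writes $G(\e)/\e=\bigl((t_\e-1)/\e\bigr)\,Q(\e)$, where $Q(\e)$ tends as $\e\to 0$ to the Nehari expression $\|(u_0,v_0)\|_{\E_p}^p-\la\!\int_\s a|u_0|^q\dm-\ga\!\int_\s b|v_0|^q\dm-\int_\s h|u_0|^\al|v_0|^\be\dm$, which vanishes by \eqref{eq-1} since $(u_0,v_0)\in\M_{\la,\ga}$. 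The quotient $(t_\e-1)/\e$ stays bounded near $\e=0$ because the implicit function theorem behind Lemma \ref{lem-3} realises $t_\e$ as a $C^1$ function of the four arguments $(\|(U_\e,V_\e)\|_{\E_p}^p,\int a|U_\e|^q\dm,\int b|V_\e|^q\dm,\int h|U_\e|^\al|V_\e|^\be\dm)$, and each of these is Lipschitz in $\e$---the polynomial coordinates are $C^1$ in $\e$ by dominated convergence, and $\e\mapsto\E_p(U_\e)$ is locally Lipschitz by convexity. Hence $G(\e)/\e\to 0$.

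It therefore suffices to compute the one-sided limits of $H(\e)/\e$. The polynomial integrands contribute, by dominated convergence together with the $L^\infty$ control from Lemma \ref{lem-5},
\[\mathcal R(\phi_1,\phi_2)\ce\la\!\int_\s a|u_0|^{q-2}u_0\phi_1\dm+\ga\!\int_\s b|v_0|^{q-2}v_0\phi_2\dm+\frac{\al}{\al+\be}\!\int_\s h|u_0|^{\al-2}u_0\phi_1|v_0|^\be\dm+\frac{\be}{\al+\be}\!\int_\s h|u_0|^\al|v_0|^{\be-2}v_0\phi_2\dm.\]
The $p$-energy part is where the interval structure enters: by convexity of $t\mapsto\E_p(u_0+t\phi_1)$ the one-sided limits $\lim_{\e\to 0^{\pm}}\e^{-1}[\E_p(u_0+\e\phi_1)-\E_p(u_0)]=p\,\E_p^{\pm}(u_0,\phi_1)$ exist, and analogously for $v_0,\phi_2$. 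Thus $\liminf_{\e\to 0^+}F(\e)/\e\ge 0$ becomes $\E_p^+(u_0,\phi_1)+\E_p^+(v_0,\phi_2)\ge\mathcal R(\phi_1,\phi_2)$, while $\limsup_{\e\to 0^-}F(\e)/\e\le 0$ becomes $\E_p^-(u_0,\phi_1)+\E_p^-(v_0,\phi_2)\le\mathcal R(\phi_1,\phi_2)$. Since $\E_p(u_0,\phi_1)+\E_p(v_0,\phi_2)$ is the compact interval $[\E_p^-(u_0,\phi_1)+\E_p^-(v_0,\phi_2),\,\E_p^+(u_0,\phi_1)+\E_p^+(v_0,\phi_2)]$, these two bounds together read $\mathcal R(\phi_1,\phi_2)\in\E_p(u_0,\phi_1)+\E_p(v_0,\phi_2)$, which is precisely the weak-solution condition.

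The principal obstacle is the simultaneous non-smoothness of $t_\e$ and of $\e\mapsto\E_p(U_\e)$ at $\e=0$. The decisive observation that dissolves it is that we never need $t'_\e(0^{\pm})$ to exist or be computed: since the factor multiplying $(t_\e-1)/\e$ in $G(\e)/\e$ is the Nehari functional evaluated at $(u_0,v_0)$, which is forced to zero by membership in $\M_{\la,\ga}$, mere boundedness of $(t_\e-1)/\e$ suffices. Once $G(\e)/\e\to 0$ is secured, the rest of the argument reduces to directional-derivative calculus for the convex functional $\E_p$ together with standard dominated convergence for the integral terms.
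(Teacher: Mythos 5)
Your proof is correct, and its skeleton is the same as the paper's: fix $(\phi_1,\phi_2)$, use Lemma \ref{lem-3}(1) to produce the rescaled perturbations $(t_\e(u_0+\e\phi_1),t_\e(v_0+\e\phi_2))\in\M_{\la,\ga}^{+}$, exploit minimality through one-sided difference quotients, and land in the interval $\E_p(u_0,\phi_1)+\E_p(v_0,\phi_2)=[\E_p^-(u_0,\phi_1)+\E_p^-(v_0,\phi_2),\,\E_p^+(u_0,\phi_1)+\E_p^+(v_0,\phi_2)]$. Where you genuinely diverge is the decomposition and the treatment of $t_\e$. The paper splits through the intermediate point $(t_\e u_0,t_\e v_0)$ and discards $\frac{1}{\e}\bigl(I_{\la,\ga}(t_\e u_0,t_\e v_0)-I_{\la,\ga}(u_0,v_0)\bigr)$ by asserting that this limit ``is the same as $\Phi'_{u_0,v_0}(1)$, which is zero''; that step tacitly requires boundedness of $(t_\e-1)/\e$, since $\frac{1}{\e}\bigl(\Phi_{u_0,v_0}(t_\e)-\Phi_{u_0,v_0}(1)\bigr)=\frac{\Phi_{u_0,v_0}(t_\e)-\Phi_{u_0,v_0}(1)}{t_\e-1}\cdot\frac{t_\e-1}{\e}$ and only the first factor is controlled by $t_\e\to1$. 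You instead split through $(u_0+\e\phi_1,v_0+\e\phi_2)$, so all $t_\e$-dependence sits in the rescaling term $G$, which you annihilate by the Nehari identity \eqref{eq-1} combined with an explicit Lipschitz bound $|t_\e-1|\leq C|\e|$, derived from the $C^1$ implicit function $g$ behind Lemma \ref{lem-3} (legitimate, since $\f$ is smooth near the base point and $\partial\f/\partial t=\Phi''_{u_0,v_0}(1)>0$ there) composed with coordinates that are locally Lipschitz in $\e$ (convexity of $\e\mapsto\E_p(u_0+\e\phi_1)$ for the energy coordinate, dominated convergence for the integral ones). This buys a fully justified version of precisely the step the paper waves through, at the cost of a slightly longer argument; after that, both proofs conclude identically. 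One small repair to yours: the factorization $t^k-1=(t-1)(t^{k-1}+\dots+1)$ is valid only for integer $k$, whereas $p$, $q$, $\al+\be$ are arbitrary reals; replace it by the mean value theorem, $t_\e^k-1=k\,\xi_\e^{k-1}(t_\e-1)$ with $\xi_\e$ between $1$ and $t_\e$, which yields the same factor $Q(\e)$ converging to the Nehari expression, so your conclusion $G(\e)/\e\to0$ stands. (Your dominated-convergence step for the $h$-term differentiates $s\mapsto|s|^{\al}$ pointwise, which is delicate if $\min(\al,\be)<1$; but the paper's own computation does exactly the same, so this is a shared caveat, not a gap relative to it.)
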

\begin{proof}
Let $(\psi_1, \psi_2) \in \dom_0(\E_p) \times \dom_0(\E_p).$ Using Lemma \ref{lem-3}\eqref{1}, there exists $\e_0 > 0$ such that  for each $\e \in (-\e_0,\e_0)$ there exists $t_\e$ such that $I_{\la,\ga}(t_\e (u_0 + \e \psi_1),t_\e(v_0+\e \psi_2)) \geq I_{\la,\ga}(u_0,v_0)$ and $t_\e \to 1$ as $\e \to 0.$ Then we have
\begin{align*}
0 &\leq \lim\limits_{\e \to 0^+} \frac{1}{\e}\left(I_{\la,\ga}(t_\e (u_0 + \e \psi_1),t_\e(v_0+\e \psi_2)) - I_{\la,\ga}(u_0,v_0)\right)\\
 &= \lim\limits_{\e \to 0^+}\frac{1}{\e}\left(I_{\la,\ga}(t_\e (u_0 + \e \psi_1),t_\e(v_0+\e \psi_2))-I_{\la,\ga}(t_\e u_0,t_\e v_0)+I_{\la,\ga}(t_\e u_0,t_\e v_0)- I_{\la,\ga}(u_0,v_0)\right)\\
  &= \lim\limits_{\e \to 0^+}\frac{1}{\e} \left(I_{\la,\ga}(t_\e (u_0 + \e \psi_1),t_\e(v_0+\e \psi_2))-I_{\la,\ga}(t_\e u_0,t_\e v_0)\right)\\
&= \lim\limits_{\e \to 0^+}\frac{t_\e^p}{\e}\frac{1}{p}\left(\E_p(u_0 + \e \psi_1)+\E_p(v_0+\e \psi_2) - \E_p( u_0)-\E_p( v_0) \right)\\
 &\quad - \lim\limits_{\e \to 0^+} \frac{t_\e^q}{\e} \frac{1}{q} \left(\int\limits_{\s} \la a(x)|(u_0 + \e \psi_1)|^{q} \dm + \int\limits_{\s} \ga b(x)|(v_0+\e \psi_2)|^{q} \dm -\int\limits_{\s} \la a(x)|u_0|^{q} \dm - \int\limits_{\s} \ga b(x)|v_0|^{q} \dm\right)\\
 &\quad - \lim\limits_{\e \to 0^+}\frac{t_\e^{\al+\be}}{\e}\frac{1}{\al+\be} \left(\int\limits_{\s} h(x)|(u_0 + \e \psi_1)|^{\al}|(v_0+\e \psi_2)|^{\be} \dm - \int\limits_{\s} h(x)|u_0|^{\al}|v_0|^{\be} \dm\right)\\
 & = \E_p^+(u_0,\psi_1) + \E_p^+(v_0,\psi_2) - \int_{\s}\la a(x)|u_0|^{q-2}u_0 \psi_1 - \int_{\s}\ga b(x)|v_0|^{q-2}v_0 \psi_2\\
 & \quad - \lim\limits_{\e \to 0^+}\frac{t_\e^{\al+\be}}{\e}\frac{1}{\al+\be} \left(\int\limits_{\s} h(x)|(u_0 + \e \psi_1)|^{\al}|(v_0+\e \psi_2)|^{\be} \dm - \int\limits_{\s} h(x)|u_0|^{\al}|v_0 + \e\psi_2|^{\be} \dm \right)\\
 &\quad - \lim\limits_{\e \to 0^+}\frac{t_\e^{\al+\be}}{\e}\frac{1}{\al+\be} \left( \int\limits_{\s} h(x)|u_0|^{\al}|v_0 + \e\psi_2|^{\be} \dm- \int\limits_{\s} h(x)|u_0|^{\al}|v_0|^{\be} \dm\right) \\
\end{align*}
From above we get,
\begin{align*}
0 \leq & \E_p^+(u_0,\psi_1) + \E_p^+(v_0,\psi_2) - \int_{\s}\la a(x)|u_0|^{q-2}u_0 \psi_1 - \int_{\s}\ga b(x)|v_0|^{q-2}v_0 \psi_2 \\
&\quad  -\frac{\al}{\al+\be}\int\limits_{\s} h(x)|u_0|^{\al-2}u_0|v_0|^{\be}\psi_1 \dm  -\frac{\be}{\al+\be}\int\limits_{\s} h(x)|u_0|^{\al}|v_0|^{\be-2}v_0\psi_2 \dm.
\end{align*}
Note that the second equality follows by using $\lim\limits_{\e \to 0^+}\frac{1}{\e} \left(I_{\la,\ga}(t_\e(u_0,v_0))-  I_{\la,\ga}(u_0, v_0)\right) = 0$ because the limit is same as $\Phi'_{u_0, v_0}(1)$ which is zero.
This implies
\begin{align*}
&\int_{\s}\la a(x)|u_0|^{q-2}u_0 \psi_1 + \int_{\s}\ga b(x)|v_0|^{q-2}v_0 \psi_2 +\frac{\al}{\al+\be}\int\limits_{\s} h(x)|u_0|^{\al-2}u_0|v_0|^{\be}\psi_1 \dm  +\frac{\be}{\al+\be}\int\limits_{\s} h(x)|u_0|^{\al}|v_0|^{\be-2}v_0\psi_2 \dm \\
  & \quad \leq \E_p^+(u_0,\psi_1) + \E_p^+(v_0,\psi_2).
\end{align*}
Similarly,
\begin{align*}
0 &\geq \lim\limits_{\e \to 0^-} \frac{1}{\e}\left(I_{\la,\ga}(t_\e (u_0 + \e \psi_1),t_\e(v_0+\e \psi_2)) - I_{\la,\ga}(u_0,v_0)\right)\\
  &= \lim\limits_{\e \to 0^-} \frac{1}{\e}\left(I_{\la,\ga}(t_\e (u_0 + \e \psi_1),t_\e(v_0+\e \psi_2))-I_{\la,\ga}(t_\e u_0,t_\e v_0)+I_{\la,\ga}(t_\e u_0,t_\e v_0)- I_{\la,\ga}(u_0,v_0)\right)\\
  &= \lim\limits_{\e \to 0^-} \frac{1}{\e}\left(I_{\la,\ga}(t_\e (u_0 + \e \psi_1),t_\e(v_0+\e \psi_2))-I_{\la,\ga}(t_\e u_0,t_\e v_0)\right)\\
  &= \E_p^-(u_0,\psi_1) + \E_p^-(v_0,\psi_2) - \int_{\s}\la a(x)|u_0|^{q-2}u_0 \psi_1 - \int_{\s}\ga b(x)|v_0|^{q-2}v_0 \psi_2 \\
  &\quad   -\frac{\al}{\al+\be}\int\limits_{\s} h(x)|u_0|^{\al-2}u_0|v_0|^{\be}\psi_1 \dm  -\frac{\be}{\al+\be}\int\limits_{\s} h(x)|u_0|^{\al}|v_0|^{\be-2}v_0\psi_2 \dm
\end{align*}
which implies
\begin{align*}
&\int_{\s}\la a(x)|u_0|^{q-2}u_0 \psi_1 + \int_{\s}\ga b(x)|v_0|^{q-2}v_0 \psi_2 + \frac{\al}{\al+\be}\int\limits_{\s} h(x)|u_0|^{\al-2}u_0|v_0|^{\be}\psi_1 \dm  + \frac{\be}{\al+\be}\int\limits_{\s} h(x)|u_0|^{\al}|v_0|^{\be-2}v_0\psi_2 \dm \\
  & \quad \geq \E_p^-(u_0,\psi_1) + \E_p^-(v_0,\psi_2)
\end{align*}
So, combining the above inequalities 
\begin{align*}
 &\E_p^-(u_0,\psi_1) + \E_p^-(v_0,\psi_2) \\
 & \quad \leq \int_{\s}\la a(x)|u_0|^{q-2}u_0 \psi_1 + \int_{\s}\ga b(x)|v_0|^{q-2}v_0 \psi_2 +\frac{\al}{\al+\be}\int\limits_{\s} h(x)|u_0|^{\al-2}u_0|v_0|^{\be}\psi_1 \dm  +\frac{\be}{\al+\be}\int\limits_{\s} h(x)|u_0|^{\al}|v_0|^{\be-2}v_0\psi_2 \dm \\
 & \quad \quad \leq \E_p^+(u_0,\psi_1) + \E_p^+(v_0,\psi_2).
\end{align*}
Hence
\begin{align*}
&\int_{\s}\la a(x)|u_0|^{q-2}u_0 \psi_1 + \int_{\s}\ga b(x)|v_0|^{q-2}v_0 \psi_2 +\frac{\al}{\al+\be}\int\limits_{\s} h(x)|u_0|^{\al-2}u_0|v_0|^{\be}\psi_1 \dm  +\frac{\be}{\al+\be}\int\limits_{\s} h(x)|u_0|^{\al}|v_0|^{\be-2}v_0\psi_2 \dm \\
& \quad \in \E_p(u_0,\psi_1) + \E_p(v_0,\psi_2)
\end{align*}
 for all $(\psi_1,\psi_2) \in \dom_0(\E_p) \times \dom_0(\E_p).$ Therefore, $(u_0,v_0)$ is a weak solution to the problem \eqref{prob}.
\end{proof}
\begin{theorem}\label{thm-9}
If $(u_1, v_1)$ is a minimizer of $I_{\la,\ga}$ on $\M_{\la,\ga}^{-}$ then $(u_1, v_1)$ is a weak solution to the problem \eqref{prob}.
\end{theorem}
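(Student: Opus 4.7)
The plan is to mirror the proof of Theorem \ref{thm-8} almost verbatim, replacing each appeal to part \eqref{1} of Lemma \ref{lem-3} with an appeal to part \eqref{2}. Fix an arbitrary test pair $(\psi_1,\psi_2)\in \dom_0(\E_p)\times\dom_0(\E_p)$. By Lemma \ref{lem-3}\eqref{2}, there exists $\e_1>0$ and, for each $\e\in(-\e_1,\e_1)$, a number $t_\e>0$ with $t_\e(u_1+\e\psi_1, v_1+\e\psi_2)\in\M_{\la,\ga}^{-}$ and $t_\e\to 1$ as $\e\to 0$. Since $(u_1,v_1)$ minimizes $I_{\la,\ga}$ on $\M_{\la,\ga}^{-}$, we have
\[
I_{\la,\ga}(t_\e(u_1+\e\psi_1),t_\e(v_1+\e\psi_2))\geq I_{\la,\ga}(u_1,v_1)\quad\text{for all }\e\in(-\e_1,\e_1).
\]

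Next I would form the one-sided difference quotient
$\frac{1}{\e}\big(I_{\la,\ga}(t_\e(u_1+\e\psi_1),t_\e(v_1+\e\psi_2))-I_{\la,\ga}(u_1,v_1)\big)$
and split it through the intermediate term $I_{\la,\ga}(t_\e u_1, t_\e v_1)$. Because $(u_1,v_1)\in\M_{\la,\ga}^{-}\subset\M_{\la,\ga}$, one has $\Phi'_{u_1,v_1}(1)=0$, and together with $t_\e\to 1$ this forces $\lim_{\e\to 0}\frac{1}{\e}\big(I_{\la,\ga}(t_\e u_1, t_\e v_1)-I_{\la,\ga}(u_1,v_1)\big)=0$. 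The remaining piece is expanded exactly as in Theorem \ref{thm-8}: the $p$-energy part contributes $\E_p^{\pm}(u_1,\psi_1)+\E_p^{\pm}(v_1,\psi_2)$ via the definition \eqref{eq-6} and the convexity interpretation, the $|u|^q$ and $|v|^q$ terms give $\int_\s\la a|u_1|^{q-2}u_1\psi_1+\int_\s\ga b|v_1|^{q-2}v_1\psi_2$ by the dominated convergence theorem combined with $\|u_1\|_\infty,\|v_1\|_\infty<\infty$ (Lemma \ref{lem-5}), and the coupled term is handled by inserting $\int_\s h|u_1|^\al|v_0+\e\psi_2|^\be\dm$ as an intermediate quantity to split differentiation in the two variables.

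Taking $\e\to 0^+$ yields
\begin{align*}
0 \leq{}& \E_p^+(u_1,\psi_1)+\E_p^+(v_1,\psi_2)-\int_\s\la a|u_1|^{q-2}u_1\psi_1\dm-\int_\s\ga b|v_1|^{q-2}v_1\psi_2\dm \\
&-\frac{\al}{\al+\be}\int_\s h|u_1|^{\al-2}u_1|v_1|^\be\psi_1\dm-\frac{\be}{\al+\be}\int_\s h|u_1|^\al|v_1|^{\be-2}v_1\psi_2\dm,
\end{align*}
and taking $\e\to 0^-$ yields the reverse inequality with $\E_p^+$ replaced by $\E_p^-$. Combining the two inequalities sandwiches the integral expression between $\E_p^-(u_1,\psi_1)+\E_p^-(v_1,\psi_2)$ and $\E_p^+(u_1,\psi_1)+\E_p^+(v_1,\psi_2)$, hence the integral expression lies in the compact interval $\E_p(u_1,\psi_1)+\E_p(v_1,\psi_2)$. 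Since $(\psi_1,\psi_2)$ was arbitrary, $(u_1,v_1)$ is a weak solution of \eqref{prob}.

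The only substantive step that requires any care, as opposed to pure transcription from Theorem \ref{thm-8}, is verifying that Lemma \ref{lem-3}\eqref{2} really provides admissible competitors in $\M_{\la,\ga}^{-}$ for both positive and negative $\e$ small; once this is granted, the minimization inequality holds in the correct direction and the one-sided derivative calculation is identical, so no new obstacle appears. I expect the computation of the cross term $\int_\s h|u+\e\psi_1|^\al|v+\e\psi_2|^\be\dm$, handled via the telescoping trick in Theorem \ref{thm-8}, to be the longest routine step, but it is not conceptually harder in the $\M_{\la,\ga}^{-}$ case.
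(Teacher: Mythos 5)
Your proposal coincides with the paper's own proof of Theorem \ref{thm-9} essentially verbatim: the paper likewise invokes Lemma \ref{lem-3}\eqref{2} to obtain competitors $t_\e(u_1+\e\psi_1,\, v_1+\e\psi_2)\in\M_{\la,\ga}^{-}$, uses $\Phi'_{u_1,v_1}(1)=0$ together with $t_\e\to 1$ to annihilate the intermediate term $I_{\la,\ga}(t_\e u_1,t_\e v_1)$, and sandwiches the integral expression between $\E_p^{-}(u_1,\psi_1)+\E_p^{-}(v_1,\psi_2)$ and $\E_p^{+}(u_1,\psi_1)+\E_p^{+}(v_1,\psi_2)$ via the same telescoping of the coupled term. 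The only blemish is the harmless slip $|v_0+\e\psi_2|^{\be}$ in your intermediate quantity, which should read $|v_1+\e\psi_2|^{\be}$.
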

\begin{proof}
Let $(\psi_1, \psi_2) \in \dom_0(\E_p) \times \dom_0(\E_p).$ Using Lemma \ref{lem-3}\eqref{2}, there exists $\e_1 > 0$ such that  for each $\e \in (-\e_1,\e_1)$ there exists $t_\e$ such that $I_{\la,\ga}({t_\e}(u_1 + \e \psi_1),{t_\e}(v_1+\e \psi_2)) \geq I_{\la,\ga}(u_1,v_1)$ and ${t_\e} \to 1$ as $\e \to 0.$ Then we have
\begin{align*}
0 &\leq \lim\limits_{\e \to 0^+} \frac{1}{\e}\left(I_{\la,\ga}({t_\e} (u_1 + \e \psi_1),{t_\e}(v_1+\e \psi_2)) - I_{\la,\ga}(u_1,v_1)\right)\\
 &= \lim\limits_{\e \to 0^+}\frac{1}{\e}\left(I_{\la,\ga}({t_\e} (u_1 + \e \psi_1),{t_\e}(v_1+\e \psi_2))-I_{\la,\ga}({t_\e} u_1,{t_\e} v_1)+I_{\la,\ga}({t_\e} u_1,{t_\e} v_1)- I_{\la,\ga}(u_1,v_1)\right)\\
 &= \lim\limits_{\e \to 0^+}\frac{1}{\e} \left(I_{\la,\ga}({t_\e} (u_1 + \e \psi_1),{t_\e}(v_1+\e \psi_2))- I_{\la,\ga}({t_\e} u_1,{t_\e} v_1)\right)\\
 &= \lim\limits_{\e \to 0^+}\frac{{t_\e^p}}{\e}\frac{1}{p}\left(\E_p(u_1 + \e \psi_1)+\E_p(v_1+\e \psi_2) - \E_p( u_1)-\E_p(v_1) \right)\\
 &\quad -\lim\limits_{\e \to 0^+} \frac{t_\e^q}{\e} \frac{1}{q} \left(\int\limits_{\s} \la a(x)|(u_1 + \e \psi_1)|^{q} \dm + \int\limits_{\s} \ga b(x)|(v_1+\e \psi_2)|^{q} \dm -\int\limits_{\s} \la a(x)|u_1|^{q} \dm - \int\limits_{\s} \ga b(x)|v_1|^{q} \dm\right)\\
 &\quad - \lim\limits_{\e \to 0^+}\frac{t_\e^{\al+\be}}{\e}\frac{1}{\al+\be} \left(\int\limits_{\s} h(x)|(u_1 + \e \psi_1)|^{\al}|(v_1+\e \psi_2)|^{\be} \dm - \int\limits_{\s} h(x)|u_1|^{\al}|v_1|^{\be} \dm\right)\\
 & = \E_p^+(u_1,\psi_1) + \E_p^+(v_1,\psi_2) - \int_{\s}\la a(x)|u_1|^{q-2}u_1 \psi_1 - \int_{\s}\ga b(x)|v_1|^{q-2}v_1 \psi_2\\
 & \quad - \lim\limits_{\e \to 0^+}\frac{t_\e^{\al+\be}}{\e}\frac{1}{\al+\be} \left(\int\limits_{\s} h(x)|(u_1 + \e \psi_1)|^{\al}|(v_1+\e \psi_2)|^{\be} \dm - \int\limits_{\s} h(x)|u_1|^{\al}|v_1+ \e\psi_2|^{\be} \dm \right)\\
 & \quad - \lim\limits_{\e \to 0^+}\frac{t_\e^{\al+\be}}{\e}\frac{1}{\al+\be}\left( \int\limits_{\s} h(x)|u_1|^{\al}|v_1 + \e\psi_2|^{\be} \dm- \int\limits_{\s} h(x)|u_1|^{\al}|v_1|^{\be} \dm\right)\\
 &= \E_p^+(u_1,\psi_1) + \E_p^+(v_1,\psi_2) - \int_{\s}\la a(x)|u_1|^{q-2}u_1 \psi_1 - \int_{\s}\ga b(x)|v_1|^{q-2}v_1 \psi_2 \\
 & \quad -\frac{\al}{\al+\be}\int\limits_{\s} h(x)|u_1|^{\al-2}u_1|v_1|^{\be}\psi_1 \dm  -\frac{\be}{\al+\be}\int\limits_{\s} h(x)|u_1|^{\al}|v_1|^{\be-2}v_1\psi_2 \dm
\end{align*}
Note that the second equality follows by using $\lim\limits_{\e \to 0^+}\frac{1}{\e} \left(I_{\la,\ga}(t_\e u_1,t_\e v_1) - I_{\la,\ga}(u_1, v_1)\right) = 0$ because the limit is same as $\Phi'_{u_1, v_1}(1)$ which is zero.
This implies
\begin{align*}
 & \int_{\s}\la a(x)|u_1|^{q-2}u_1 \psi_1 + \int_{\s}\ga b(x)|v_1|^{q-2}v_1 \psi_2 +\frac{\al}{\al+\be}\int\limits_{\s} h(x)|u_1|^{\al-2}u_1|v_1|^{\be}\psi_1 \dm  +\frac{\be}{\al+\be}\int\limits_{\s} h(x)|u_1|^{\al}|v_1|^{\be-2}v_1\psi_2 \dm \\
 & \quad \leq \E_p^+(u_1,\psi_1) + \E_p^+(v_1,\psi_2).
\end{align*}
Similarly,
\begin{align*}
0 &\geq \lim\limits_{\e \to 0^-} \frac{1}{\e}\left(I_{\la,\ga}({t_\e} (u_1 + \e \psi_1),{t_\e}(v_1+\e \psi_2)) - I_{\la,\ga}(u_1,v_1)\right)\\
 &= \lim\limits_{\e \to 0^-} \frac{1}{\e}\left(I_{\la,\ga}({t_\e} (u_1 + \e \psi_1),{t_\e}(v_1+\e \psi_2))-I_{\la,\ga}({t_\e} u_1,{t_\e} v_1)+I_{\la,\ga}({t_\e} u_1,{t_\e} v_1)- I_{\la,\ga}(u_1,v_1)\right)\\
 &= \lim\limits_{\e \to 0^-} \frac{1}{\e}\left(I_{\la,\ga}({t_\e} (u_1 + \e \psi_1),{t_\e}(v_1+\e \psi_2))-I_{\la,\ga}({t_\e} u_1,{t_\e} v_1)\right)\\
 &= \E_p^-(u_1,\psi_1) + \E_p^-(v_1,\psi_2) - \int_{\s}\la a(x)|u_1|^{q-2}u_1 \psi_1 - \int_{\s}\ga b(x)|v_1|^{q-2}v_1 \psi_2 \\
 &\quad -\frac{\al}{\al+\be}\int\limits_{\s} h(x)|u_1|^{\al-2}u_1|v_1|^{\be}\psi_1 \dm  -\frac{\be}{\al+\be}\int\limits_{\s} h(x)|u_1|^{\al}|v_1|^{\be-2}v_1\psi_2 \dm
\end{align*}
which implies
\begin{align*}
&\int_{\s}\la a(x)|u_1|^{q-2}u_1 \psi_1 + \int_{\s}\ga b(x)|v_1|^{q-2}v_1 \psi_2 + \frac{\al}{\al+\be}\int\limits_{\s} h(x)|u_1|^{\al-2}u_1|v_1|^{\be}\psi_1 \dm + \frac{\be}{\al+\be}\int\limits_{\s} h(x)|u_1|^{\al}|v_1|^{\be-2}v_1\psi_2 \dm \\
& \quad \geq \E_p^-(u_1,\psi_1) + \E_p^-(v_1,\psi_2).
\end{align*}
Therefore,
\begin{align*}
&\E_p^-(u_1,\psi_1) + \E_p^-(v_1,\psi_2) \\
&\quad \leq \int_{\s}\la a(x)|u_1|^{q-2}u_1 \psi_1 + \int_{\s}\ga b(x)|v_1|^{q-2}v_1 \psi_2 +\frac{\al}{\al+\be}\int\limits_{\s} h(x)|u_1|^{\al-2}u_1|v_1|^{\be}\psi_1 \dm  +\frac{\be}{\al+\be}\int\limits_{\s} h(x)|u_1|^{\al}|v_1|^{\be-2}v_1\psi_2 \dm \\
& \quad \quad \leq \E_p^+(u_1,\psi_1) + \E_p^+(v_1,\psi_2).
\end{align*}
Hence,
\begin{align*}
 &\int_{\s}\la a(x)|u_1|^{q-2}u_1 \psi_1 + \int_{\s}\ga b(x)|v_1|^{q-2}v_1 \psi_2 +\frac{\al}{\al+\be}\int\limits_{\s} h(x)|u_1|^{\al-2}u_1|v_1|^{\be}\psi_1 \dm  +\frac{\be}{\al+\be}\int\limits_{\s} h(x)|u_1|^{\al}|v_1|^{\be-2}v_1\psi_2 \dm \\
 & \quad \in \E_p(u_1,\psi_1) + \E_p(v_1,\psi_2).
\end{align*}
for all $(\psi_1,\psi_2) \in \dom_0(\E_p) \times \dom_0(\E_p).$ Therefore, $(u_1,v_1)$ is a weak solution to the problem \eqref{prob}.
\end{proof}

Now we are on the verge of proving the main theorem, which we had stated in Section 2.
\begin{proof}[Proof of Theorem \ref{main}]
In Lemma \ref{lem-8} we have proved the existence of $\kappa_0.$ In Theorem \ref{thm-1} and \ref{thm-2} we have shown if $(\la,\ga) \in \La_0$ then there exist minimizers of $I_{\la,\ga}$ on $\M_{\la,\ga}^+$ and $\M_{\la,\ga}^-.$ In Theorem \ref{thm-8} and \ref{thm-9} we have shown that both minimizers are solutions to \eqref{prob}. Hence \eqref{prob} has at least two non trivial solutions. This completes the proof.
\end{proof}

\begin{remark}
For $p=2,$ in the above proofs $\lq\in \rq$ sign will be replaced by $\lq = \rq$ as the function $A_2$ is differentiable in this case. So, $\E_2^+(u_1,\psi_1) = \E_2^-(u_1,\psi_1).$
\end{remark}

\end{document}